\documentclass[11pt,reqno]{amsart}
\usepackage{amsmath,amssymb,amsfonts}
\usepackage{mathrsfs}
\usepackage[title]{appendix}
\usepackage{mathabx}
\usepackage{color}
\definecolor{BLUE}{rgb}{0,0,1}
\definecolor{BLACK}{rgb}{0,0,0}
\definecolor{black}{rgb}{0,0,0}

\usepackage[hidelinks]{hyperref}
\usepackage{bm}

\allowdisplaybreaks
\usepackage[numbers,sort&compress]{natbib}

\makeatletter
\def\tank#1{\protected@xdef\@thanks{\@thanks
		\protect\footnotetext[0]{#1}}}
\def\bigfoot{
	
	\@footnotetext}
\makeatother

\newcommand{\ea}{\end{array}}

\allowdisplaybreaks
\numberwithin{equation}{section}

\newtheorem{theorem}{Theorem}[section]
\newtheorem{lemma}{Lemma}[section]
\newtheorem{proposition}[theorem]{Proposition}

\newtheorem{condition}[theorem]{Condition}

\newtheorem{remark}{Remark}[section]

\def\beq{\begin{equation}}
\def\nneq{\end{equation}}

\def\bthm{\begin{theorem}}
\def\nthm{\end{theorem}}

\def\blem{\begin{lemma}}
\def\nlem{\end{lemma}}
\def\bprf{\begin{proof}}
\def\nprf{\end{proof}}
\def\bprop{\begin{prop}}
\def\nprop{\end{prop}}
\def\brmk{\begin{rem}}
\def\nrmk{\end{rem}}

\def\bexa{\begin{exa}}
\def\nexa{\end{exa}}
\def\bcor{\begin{cor}}
\def\ncor{\end{cor}}

\def\cF{\mathcal{F}}

\def\cH{\mathcal{H}}

\def\RR{\mathbb{R}}

\def\EE{\mathbb{E}}

\def\pp{\mathbb{P}}

\newcommand\bp{\mathbb{P}}

\def\ee{{\mathbb E}}
\def\e{{\varepsilon}}

\newcommand{\ud}{ \mathrm{d}}

\newcommand{\FoxH}[5]{H_{#2}^{#1}\left(#3\:\middle\vert\: \begin{subarray}{l}#4\\[0.4em] #5\end{subarray}\right)}

\title[Khinchin's and Chung's LILs at time zero for an SFDE]{Khinchin's and Chung's Laws of the Iterated Logarithm at Time Zero for the Linear Stochastic Fractional Diffusion Equation}

\author[C. Liu]{Chang Liu}
\address[]{Chang Liu, School of Mathematics and Statistics,  Wuhan University,  Wuhan 430072,
China.}
\email{changliu0504@163.com}

\author[R. Wang]{Ran Wang}
\address[]{Ran Wang, School of Mathematics and Statistics,  Wuhan University,  Wuhan 430072,
China.}
\email{rwang@whu.edu.cn}

\date{}

\begin{document}
\maketitle 
\noindent {\bf Abstract.}
We consider the linear stochastic fractional diffusion equation
\begin{equation*}
\partial^{\beta} u(t,x)
=
-\left(-\Delta\right)^{\alpha/2}u(t,x)
+
I_t^{\gamma}\bigl[\dot W(t,x)\bigr],
\qquad
t>0,\quad x\in\mathbb R^d,
\end{equation*}
with zero initial conditions, where $\alpha>0$, $\beta\in(0,2)$, and $\gamma\ge0$. The driving noise $\dot W$ is a centered Gaussian generalized field that is fractional in time and has Riesz-type spatial covariance. For each fixed $x\in\mathbb R^d$, we establish a Khinchin-type law of the iterated logarithm at time zero for the temporal process $t\mapsto u(t,x)$. Under the additional conditions $0\le\gamma<1$ and $\beta+\gamma<2+H$, we also prove the corresponding Chung-type law. The proofs rely on a harmonizable representation, sharp frequency-truncation estimates, an exact small-ball asymptotic, and a localization argument. These results extend the initial-time laws of the iterated logarithm for stochastic heat equations to a broad class of time-fractional stochastic diffusion equations.

\vskip0.3cm
\noindent{\bf Keywords.} \textcolor{black}{Stochastic fractional diffusion equation; Mittag--Leffler function; Khinchin's law of the iterated logarithm; Chung's law of the iterated logarithm; small-ball probabilities.}
\vskip0.3cm

\noindent {\bf 2020 Mathematics Subject Classification.} \textcolor{black}{60H15; 60G17; 60G22.}


\section{Introduction}

Consider the following stochastic fractional diffusion equation (SFDE):
\begin{equation}\label{e:fde}
	\begin{cases}
		\partial^{\beta} u(t, x)
		=
		-\left(-\Delta\right)^{\alpha / 2} u(t, x)
		+
		I_{t}^{\gamma}\big[\dot{W}(t, x)\big],
		& t>0,\ x \in \mathbb{R}^d,   \\
		u(0, \cdot)=0,
		& \text{if } \beta \in(0,1], \\
		u(0, \cdot)=0,\quad
		\dfrac{\partial}{\partial t} u(0, \cdot)=0,
		& \text{if } \beta \in(1,2),
	\end{cases}
\end{equation}
where $\alpha>0$, $\beta\in(0,2)$, and $\gamma\ge0$.  
\textcolor{black}{Here, the symbol} $\partial^\beta$ denotes the Caputo fractional derivative  of order $\beta$. For a sufficiently regular function $f$, it is defined by 
\begin{equation}\label{e:CFDO}
	\partial^{\beta} f(t):=
	\begin{cases}
		\displaystyle
		\frac{1}{\Gamma(n-\beta)}
		\int_{0}^{t}
		\frac{f^{(n)}(\tau)}
		{(t-\tau)^{\beta+1-n}}
		\,\ud\tau,
		& \text{if } \beta \neq n, \vspace{0.2cm} \\
		\displaystyle
		\frac{\ud^n}{\ud t^n}f(t),
		& \text{if } \beta = n,
	\end{cases}
\end{equation}
where $n=\lceil\beta\rceil$ and $\Gamma(\cdot)$ denotes the Gamma function.  \textcolor{black}{The operator $I_t^\gamma$ denotes the Riemann--Liouville fractional integral of order $\gamma$, defined by} 
\begin{align}\label{eq int}
 	(I_{t}^{\gamma}f)(t)
	:=
	\frac{1}{\Gamma(\gamma)}
	\int_{0}^{t}f(r)(t-r)^{\gamma-1}\,\ud r,
	\qquad \gamma>0,
\end{align}
with the convention that $I_t^0=\mathrm{Id}$.

To define the spatial operator, we adopt the Fourier transform convention
\[
\widehat f(\xi)=\mathcal F f(\xi)
:=
\int_{\mathbb R^d} e^{-i x\cdot \xi} f(x)\,\ud x,
\qquad \xi\in\mathbb R^d .
\]
The operator $(-\Delta)^{\alpha/2}$ is  the nonnegative
pseudo-differential operator with Fourier symbol $|\xi|^\alpha$; that is,
\[
\mathcal F\big[(-\Delta)^{\alpha/2}f\big](\xi)
=
|\xi|^\alpha \widehat f(\xi),
\qquad \xi\in\mathbb R^d,
\]
for suitable functions $f$. When $0<\alpha\le2$, this operator is 
the usual fractional Laplacian. For $\alpha>2$,  it is understood as the $\alpha/2$-th power of the nonnegative operator $-\Delta$.

\textcolor{black}{The driving noise $\dot W$ is a centered Gaussian generalized field defined on a probability space}
$(\Omega,\mathcal F,\mathbb P)$. It is fractional in time with Hurst index
$H_0\in[1/2,1)$ and has a Riesz-type spatial covariance structure. \textcolor{black}{More precisely, for suitable test functions} $\varphi$ and $\psi$, 
\begin{equation}\label{e:covariance-noise-2} 
\EE[W(\varphi)W(\psi)]:=\begin{cases}\displaystyle
	(2\pi)^{-1}\int_{\RR} \int_{\RR^d}|\xi|^{\ell-d}\mathcal F\varphi(*,\cdot)(\tau,\xi)\overline{\cF\psi(*,\cdot)(\tau,\xi)}d\tau d\xi,& H_0=1/2, \\[2.5ex]
	\displaystyle
	(2\pi)^{-1}\int_{\RR} \int_{\RR^d}|\tau|^{1-2H_0}|\xi|^{\ell-d}\mathcal F\varphi(*,\cdot)(\tau,\xi)\overline{\cF\psi(*,\cdot)(\tau,\xi)}d\tau d\xi, & H_0\in(1/2,1),
\end{cases}
\end{equation}
\textcolor{black}{where $\cF\varphi$ denotes the space--time Fourier transform of $\varphi$, namely,}
$$
\cF\varphi(*,\cdot)(\tau,\xi):=\int_{\RR}\int_{\RR^d}e^{-i(\tau t+\xi\cdot x)}\varphi(t,x)dxdt.
$$ 

Equation \eqref{e:fde} encompasses several important models, including the stochastic heat equation and its fractional variants. Fine sample path properties of solutions to these equations, such as H\"older regularity, local moduli of continuity, small-ball probabilities, and laws of the iterated logarithm, have been studied extensively; see, for example, \cite{CGS2024,CHN2019,CK2019,CLX2026,GSWX2025}. In particular, Khinchin-type and Chung-type laws of the iterated logarithm provide sharp almost sure characterizations of temporal oscillations at small time scales.

Following the pinned-string approach of Mueller and Tribe \cite{MT02}, the solution to the linear stochastic fractional heat equation with $\beta=1$ and $\gamma=0$ can be decomposed into two Gaussian random fields,  one having stationary increments and the other having smoother sample paths; see \cite{LN2009, HSWX2020, KT2019, TX17, WZ2021}. Further background and applications of this decomposition to fine sample path properties can be found in  \cite[Section 3.3]{K2014} and \cite[Section 3.2]{DS26}.  For the more general   equation \eqref{e:fde}, Guo et al. \cite{GSWX2025} derived an analogous  decomposition of $t\mapsto u(t,x)$ into a fractional Brownian motion  (FBM, for short)  and a smoother Gaussian remainder.  Consequently, at every fixed $t>0$,  the temporal process has the same local  behavior as FBM and satisfies the corresponding Khinchin-type and Chung-type LILs.

The situation at  $t=0$   is fundamentally different.  As $t\downarrow0$, the smoother remainder in the decomposition is no longer negligible relative to the FBM component
 and may contribute at the same order under the normalization relevant to the LILs. Consequently, the  initial-time   LILs for $u(t,x)$  cannot be deduced directly from the arguments used at fixed positive times.

\textcolor{black}{Recently, Lee and Xiao \cite{LX2023} developed a general framework for establishing Chung-type LILs and exact moduli of continuity for centered Gaussian random fields, based on harmonizable representations and strong local nondeterminism.} Building on this framework, Chen et al. \cite{CLX2026}  established  strong local nondeterminism in both temporal and spatial variables  for solutions to stochastic time-fractional slow and fast diffusion equations. They further obtained exact moduli of continuity, Chung-type LILs, and small-ball probability estimates. However, their results are  restricted to time intervals bounded away from zero, and the underlying arguments based on harmonizable representations and strong local nondeterminism do not apply directly near the initial time. 

\textcolor{black}{The purpose of this paper is to fill this gap by establishing Khinchin-type and Chung-type LILs at time zero for the temporal process $t\mapsto u(t,x)$, for every fixed $x\in\mathbb R^d$.}

\begin{condition}\label{hypo} Assume that 
	$$
	\alpha>0,\quad \beta\in(0,2),\quad H_0\in[1/2,1),\quad \ell\in(0,2d\wedge 2\alpha),\quad \gamma\ge 0,  
	$$
    and 
\begin{equation}\label{def:H}
H:=\beta+\gamma+H_0-1-\frac{\beta\ell}{2\alpha}>0.
\end{equation}
\end{condition}
 
By  Chen et al. \cite[Theorem 1.1(i)]{CLX2026},  under Condition \ref{hypo}, 
  equation \eqref{e:fde} admits a unique random field solution  given by
\begin{equation}\label{def:u}
u(t, x)=  \int_0^t \int_{\RR^d}G(t-s,x-y)W(ds,dy), \ \ \text{a.s.},
\end{equation}
where
\[
G(t,x) := \pi^{-d/2}|x|^{-d}t^{\beta+\gamma-1}  \FoxH{2,1}{2,3}{\frac{ |x|^\alpha}{2^{\alpha} t^\beta}}
{(1,1),\:(\beta+\gamma,\beta)}{(d/2,\alpha/2),\:(1,1),\:(1,\alpha/2)}, \quad \text{for } t>0,
\]
and  $G(t, x):=0$ for $t\le 0$, where  $H^{m,n}_{p,q}(z)$ is the Fox $H$-function. See \cite[Theorem 2.8]{CGS2024}, \cite{KS2004}, and Section \ref{sec regularity} below.

\begin{theorem}[Khinchin's LIL]\label{Khinchin:u}
	\textcolor{black}{Suppose  Condition~\ref{hypo} holds and that  $H<1$. Then, for every fixed} $x\in\RR^d$,
	\begin{equation}\label{Khinchin:u:equ}
		\begin{split}
			\limsup_{t \downarrow0} \frac{|u(t, x)|}{t^{H}\sqrt{2 \log\log(1/t)}}=\widetilde{\kappa},\ \ \ a.s.,
		\end{split}
	\end{equation}
	where $\widetilde\kappa$ is defined by
	\begin{equation}\label{def:tilde:kappa}
		\widetilde\kappa=
		\begin{cases}
			\displaystyle
            \Biggl(\int_{\mathbb{R}^d} d\xi\,|\xi|^{\ell-d}
			\int_{[0,1]} ds\, s^{2(\beta+\gamma-1)}
			E_{\beta,\beta+\gamma}^2\left(-|\xi|^\alpha s^\beta\right)
			\Biggr)^{\!1/2}, & H_0=1/2,
            \\[2.5ex]
            \Biggl(c_{H_0}\int_{\mathbb{R}^d} d\xi\,|\xi|^{\ell-d}
			\int_{0}^1 \int_0^1ds_1ds_2\,|s_1-s_2|^{2H_0-2}\\
			\qquad\cdot s_1^{\beta+\gamma-1}
			E_{\beta,\beta+\gamma}\left(-|\xi|^\alpha s_1^\beta\right)\,
			s_2^{\beta+\gamma-1}
			E_{\beta,\beta+\gamma}\left(-|\xi|^\alpha s_2^\beta\right)
			\Biggr)^{\!1/2}, & H_0\in(1/2,1).
		\end{cases}
	\end{equation}

\end{theorem}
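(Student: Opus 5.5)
The strategy is to use exact self-similarity of $t\mapsto u(t,x)$ in time and then prove a Khinchin LIL for a self-similar Gaussian process with a decorrelation property. By spatial stationarity we may take $x=0$. The Fourier transform of $G(t,\cdot)$ is $t^{\beta+\gamma-1}E_{\beta,\beta+\gamma}(-|\xi|^\alpha t^\beta)$. Substituting $s\mapsto ts$ and $\xi\mapsto t^{-\beta/\alpha}\xi$ in the covariance shows that $\{u(ct,0)\}_{t\ge0}$ and $\{c^{H}u(t,0)\}_{t\ge0}$ have the same law for every $c>0$. The exponent is exactly $H=\beta+\gamma+H_0-1-\beta\ell/(2\alpha)$. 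In particular $\mathrm{Var}(u(t,0))=\widetilde\kappa^2 t^{2H}$, and $\widetilde\kappa^2$ is the variance at $t=1$ computed from \eqref{e:covariance-noise-2}, which explains the formula \eqref{def:tilde:kappa}.

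\emph{Upper bound.} Fix $\theta>1$ small and set $t_n=\theta^{-n}$. The Gaussian tail bound gives $\mathbb P\bigl(|u(t_n,0)|>(1+\epsilon)\widetilde\kappa t_n^H\sqrt{2\log\log(1/t_n)}\bigr)$, which is summable in $n$. To pass from the grid to all $t\in[t_{n+1},t_n]$ we need the increment bound
\[
\EE|u(t,0)-u(s,0)|^2\le C|t-s|^{2H'}t^{2(H-H')}, \qquad s,t\in[t_{n+1},t_n],
\]
for some $H'\in(0,1)$. Scaling reduces this to the Hölder regularity of $u(\cdot,0)$ on $[\theta^{-1},1]$, which I would take from the regularity results of Section~\ref{sec regularity} (cf.\ \cite{GSWX2025,CLX2026}). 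Dudley's entropy bound combined with Borell--TIS then shows that $\sup_{[t_{n+1},t_n]}|u(t)-u(t_n)|\le \delta(\theta)t_n^H\sqrt{\log\log(1/t_n)}$ eventually, with $\delta(\theta)\to0$ as $\theta\downarrow1$. The hypothesis $H<1$ is natural here, since it keeps the scale-$t$ oscillation of order $t^H$.

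\emph{Lower bound.} Take $t_n=n^{-n}$, so that $t_{n+1}/t_n\to0$. Split $u(t_n,0)=A_n+B_n$ using the harmonizable representation (the truncation estimates mentioned in the abstract). Here $A_n$ collects the noise in the time window $(t_{n+1},t_n]$ and the spatial frequencies $|\xi|\in[a_n,b_n]$, with $a_n$ large relative to $t_n^{-\beta/\alpha}$ times a small factor and $b_n$ small relative to $t_{n+1}^{-\beta/\alpha}$ times a large factor. Because the windows are disjoint, the $A_n$ are independent, and $\mathrm{Var}(A_n)/(\widetilde\kappa^2t_n^{2H})\to1$. For $H_0>1/2$ the time windows are not independent; in that case I would truncate in the temporal Fourier variable $\tau$ as well, using the spectral measure $|\tau|^{1-2H_0}|\xi|^{\ell-d}$. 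The second Borel--Cantelli lemma then gives $A_n>(1-\epsilon)\widetilde\kappa t_n^H\sqrt{2\log\log(1/t_n)}$ infinitely often. The remainder satisfies $\mathrm{Var}(B_n)\le\eta_n t_n^{2H}$ with $\eta_n\to0$ fast enough that the Gaussian tail makes $B_n$ negligible almost surely. Replacing $u$ by $-u$ handles the absolute value, so the limsup equals $\widetilde\kappa$.

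\emph{Main obstacle.} The hard step is the remainder estimate, namely showing that the frequency truncation loses only a vanishing fraction of the variance. This requires tail bounds on $E_{\beta,\beta+\gamma}(-z)$: the decay $|E_{\beta,\beta+\gamma}(-z)|\le C/(1+z)$ for large $\xi$, and the behavior near $s\to0$ for small $\xi$. The temporal truncation in the case $H_0>1/2$ is the delicate part, and I would handle it in the $(\tau,\xi)$ Fourier domain with explicit power counting, using $\ell<2\alpha$ and $H>0$ to ensure integrability.
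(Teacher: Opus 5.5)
Your outline is correct. Its skeleton is the paper's: self-similarity, a geometric Borel--Cantelli argument for the upper bound, and independent blocks with the second Borel--Cantelli lemma for the lower bound. Two ingredients differ.

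\emph{Upper bound.} The paper does not split into grid values and oscillations. It applies Talagrand's tail bound (Lemma~\ref{Talagrand}) directly to $\sup_{0<t\le r_n}|u(t,x)|$, with $\sigma=\widetilde\kappa r_n^H$ and covering numbers taken from the scale-free H\"older bound of Proposition~\ref{solution:holder}. Your route uses pointwise Gaussian tails at $\theta^{-n}$, plus Dudley and Borell--TIS for the oscillation on $[t_{n+1},t_n]$, whose $L^2$-diameter is $O\bigl(t_n^H(1-\theta^{-1})^{H'}\bigr)$. It needs only H\"older continuity on $[\theta^{-1},1]$ and standard chaining, at the price of the extra parameter $\delta(\theta)$.

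\emph{Lower bound.} The paper uses temporal-frequency blocks $|\tau|^H\in[b_n,b_{n+1})$ for every $H_0\in[1/2,1)$, with $t_n=\exp(-(n^\delta+n^{1+\delta})/H)$. It bounds the remainder via Lemma~\ref{U:JR} and Proposition~\ref{Harmonizable}, obtaining the stretched-exponential rate \eqref{U:Dn}, which is reused in the Chung upper bound. Your time windows give independence only for white-in-time noise, which is what forces your case split. The spatial cut is superfluous when $H_0=1/2$. Conversely, disjoint spatial annuli alone already give independence for every $H_0$, since the harmonizable noise is white in $\xi$. For this, take $a_nt_n^{\beta/\alpha}\to0$, $b_nt_n^{\beta/\alpha}\to\infty$ and $b_n\le a_{n+1}$, which is possible because $t_n/t_{n+1}\to\infty$.

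\emph{Remainder.} The step you call the main obstacle is lighter than you expect for this theorem. Since
\[
\mathbb P\bigl(|B_n|>\epsilon t_n^H\sqrt{2\log\log(1/t_n)}\bigr)\le 2(\log(1/t_n))^{-\epsilon^2/\eta_n},
\]
you only need $\eta_n\to0$. By self-similarity, $\eta_n$ is the fraction of the finite spectral mass of $u(1,x)$ lying outside the rescaled block, so dominated convergence suffices without any Mittag--Leffler power counting. Explicit rates such as \eqref{U:Dn} matter only for the Chung law.
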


\textcolor{black}{By \cite[Theorem~4.1(i)]{CLX2026}, the constant $\widetilde{\kappa}$ belongs to $(0,\infty)$ under Condition~\ref{hypo}.}

\begin{theorem}[Chung's LIL]\label{chung:u} 
	 Suppose that Condition \ref{hypo} holds, $H<1$,   {$0\le\gamma<1$}, and $\beta+\gamma<2+H$.
	Then, for every  fixed $x\in\RR^d$,
	\begin{equation}\label{chung:u:equ}
		\liminf_{\varepsilon\downarrow0} \sup_{t\in[0,\varepsilon]}\frac{|u(t,x)|}{\e^{H}(\log\log \e^{-1})^{-H}}
		=\kappa\lambda_H^{H},\ \ \ a.s.,
	\end{equation}
	where   
	\begin{equation}\label{def:kappa}
		\textcolor{black}{\kappa:=
		\Bigg(\frac{1}{\pi}\int_{\mathbb{R}}\frac{1-\cos\tau}{|\tau|^{2H+1}}\,d\tau
		\int_{\mathbb{R}^d}\frac{|\xi|^{\ell-d}}
		{1+2|\xi|^\alpha\cos(\pi\beta/2)+|\xi|^{2\alpha}}\,d\xi
		\Bigg)^{1/2}.}
	\end{equation}
 \textcolor{black}{Here,} $\lambda_H$ is the small-ball constant of a standard FBM $\{B_H(t)\}_{t\ge0}$ with index $H$ (see, e.g., \cite{LS2001}), defined by
 \begin{equation}\label{def:lambda:H}
\lambda_H:=-\lim_{\varepsilon\downarrow0}\varepsilon^{1/H}
\log\mathbb P\left(\sup_{0\le t\le1}|B_H(t)|\le\varepsilon\right).
\end{equation}
\end{theorem}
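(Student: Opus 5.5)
The plan is to use the exact self-similarity of $t\mapsto u(t,x)$ on rescaled spatial frequencies, together with the harmonizable representation. The goal is a Chung LIL of the same form as for FBM, but with the small-ball rate of $u$ at time zero governed by the "stationary-increment" part with constant $\kappa^2\lambda_H$. We may take $x=0$ by spatial stationarity.

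\textbf{Step 1: self-similarity.} Write
\[
u(t,0)=\int_{\RR}\int_{\RR^d} \frac{e^{it\tau}-\text{(remainder)}}{\cdots}\,\widetilde W(d\tau,d\xi),
\]
or equivalently use the kernel $\widehat G(t,\xi)=t^{\beta+\gamma-1}E_{\beta,\beta+\gamma}(-|\xi|^\alpha t^\beta)$. From the covariance structure one checks that $\{u(ct,0)\}_{t\ge0}\overset{d}{=}\{c^{H}u(t,0)\}_{t\ge0}$; this is the change of variables $\xi\mapsto c^{-\beta/\alpha}\xi$, $s\mapsto cs$, and it yields exactly the exponent $H$ in \eqref{def:H}.

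\textbf{Step 2: small-ball asymptotics.} Prove
\[
\lim_{\e\downarrow 0}\e^{1/H}\log\pp\Bigl(\sup_{t\in[0,1]}|u(t,0)|\le\e\Bigr)=-\kappa^{1/H}\lambda_H .
\]
Decompose $u=\kappa B_H+R$ on $[0,1]$ as in \cite{GSWX2025}. Here the remainder $R$ is smoother away from $0$, but near $0$ it is only self-similar of the same order. To handle this, use the harmonizable representation and split the frequencies into $|\xi|>N$ and $|\xi|\le N$. High spatial frequencies correspond, after scaling, to large times, where the process behaves like the stationary-increment field with spectral density
\[
\frac{1-\cos\tau}{|\tau|^{2H+1}}\cdot\frac{|\xi|^{\ell-d}}{|1+(i\tau)^{-\beta}\ldots|^2},
\]
and whose integral gives $\kappa$. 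Low frequencies give a process that is smoother, with Hölder exponent larger than $H$ (this is where $\gamma<1$ and $\beta+\gamma<2+H$ enter, ensuring enough regularity of the truncated kernel). Its small-ball entropy is of lower order $o(\e^{-1/H})$.

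Combine the two pieces by Anderson's inequality and the weak-correlation or Li's inequality, $\pp(\|X+Y\|\le\e)\ge\pp(\|X\|\le\lambda\e)\,\pp(\|Y\|\le(1-\lambda)\e)$. For independent frequency bands this applies directly. For the upper bound, use Anderson's inequality on the independent low-frequency part to reduce to the high-frequency part, and compare that with FBM via its $\Sigma$-stationary-increment small-ball constant (Li--Shao). The delicate point is that the high-frequency piece is only asymptotically FBM. I would handle it with a further time localization: on $[0,1]$ the contribution of times near $0$ to the sup is controlled by self-similarity and a union bound over dyadic blocks.

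\textbf{Step 3: the LIL from small balls and localization.} For the lower bound $\ge$, take $\e_n=\theta^{-n}$ (or $n^{-n}$). By Step 2 and self-similarity,
\[
\pp\Bigl(\sup_{[0,\e_n]}|u|\le(1-\delta)\kappa\lambda_H^H\e_n^H(\log\log\e_n^{-1})^{-H}\Bigr)\approx(\log \e_n^{-1})^{-(1-\delta)^{-1/H}},
\]
which is summable. Borel--Cantelli plus monotonic interpolation then gives $\ge$. For the upper bound $\le$, take $\e_n=n^{-n}$ and build approximately independent pieces $u_n$ from the harmonizable integral restricted to frequency shells $|\xi|\in(d_{n-1},d_n]$ and time-frequency shells. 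These are independent across $n$, and the sum over the complementary shells is shown to be $o(\e_n^H(\log\log)^{-H})$ uniformly on $[0,\e_n]$ a.s. This needs the sharp truncation estimates (moment bounds plus Borell--TIS). The second Borel--Cantelli lemma applied to the $u_n$ then finishes.

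\textbf{Main obstacle.} I expect the main obstacle to be the sharp truncation estimates near $t=0$. The kernel $s^{\beta+\gamma-1}E_{\beta,\beta+\gamma}(-|\xi|^\alpha s^\beta)$ is singular or only polynomially decaying, and the restrictions $\gamma<1$ and $\beta+\gamma<2+H$ must be used to bound the sup of the discarded low- and high-frequency parts over $[0,\e_n]$ by a small multiple of the target scale.
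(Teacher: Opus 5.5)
Your Steps 1 and 3 are essentially the paper's argument. The ingredients match: self-similarity (Lemma~\ref{solution:pro}); the first Borel--Cantelli lemma along a geometric sequence plus interpolation for the lower bound; and, for the upper bound, independent frequency blocks, Anderson's inequality, the second Borel--Cantelli lemma, and an entropy/isoperimetric bound on the discarded blocks. The paper uses temporal-frequency blocks $|\tau|^H\in[b_n,b_{n+1})$ with $t_n=\rho_n^{1/H}$, $\rho_n=\exp(-(n^\delta+n^{1+\delta}))$ and $b_n=\exp(n^{1+\delta})$, so that Proposition~\ref{Harmonizable} gains a factor $\exp(-c^*n^\delta)$. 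Your $\varepsilon_n=n^{-n}$ with spatial (or mixed) shells should also work, because polynomial gains in $n$ beat the $(\log n)^H$ in the normalization and the $\sqrt{\log n}$ from the entropy integral. It does, however, need its own analogue of Proposition~\ref{Harmonizable}. Two fixes are needed. First, $\varepsilon_n=n^{-n}$ cannot be used for the lower bound, since $\varepsilon_n/\varepsilon_{n+1}\to\infty$ breaks the interpolation; the paper takes $r_n=e^{-cn}$ and then lets $c\downarrow0$. Second, in the upper bound you should state that Lemma~\ref{Gaussian:sym} gives $\mathbb P(\sup_{[0,\varepsilon_n]}|u_n|\le a)\ge\mathbb P(\sup_{[0,\varepsilon_n]}|u(\cdot,x)|\le a)$; this is what transfers the small-ball rate of $u$ to the independent blocks and makes $\sum_n\mathbb P(\cdot)$ diverge.

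The real divergence, and the weak point, is Step 2. The paper does not re-derive the small-ball asymptotic; it invokes \cite[Theorem~4.3(1)]{GSWX2025} and checks two things. Under the present Fourier normalization, the stationary-increment component has temporal spectral density $\frac{I_{\alpha,\beta,\ell}}{2\pi}|\tau|^{-2H-1}$, hence is $\kappa B_H$ in law. The condition needed for the smoother remainder is automatic when $\beta+\gamma\le2$ and becomes $\beta+\gamma<2+H$ otherwise, while $\gamma<1$ is simply a hypothesis of the cited theorem.

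You are right that the remainder is of the same order $t^H$ near $t=0$. That matters for the pathwise LIL, which is why the paper works with frequency blocks there rather than with the decomposition. It does not affect the small-ball constant on $[0,1]$: there the remainder still has a strictly smaller small-deviation exponent, and a Gaussian correlation argument combines the two pieces.

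Your replacement does not reach the crux. After a split at a fixed spatial frequency $N$, the piece $u^{>N}$ is still pinned at $0$ and does not have stationary increments. Identifying its exact constant therefore requires the same stationary-increment-plus-remainder comparison. The time localization you describe does not supply it. A union bound only disposes of an initial interval where the process is typically much smaller than $\varepsilon$. The rest needs a product (correlation) inequality together with lower-order small-deviation bounds for the remainder, which is precisely the content of the cited theorem. Also, the hypotheses $\gamma<1$ and $\beta+\gamma<2+H$ are not used for the regularity of a low-spatial-frequency piece. If you replace Step 2 by the citation plus the normalization computation in Proposition~\ref{small-ball:u}, your outline becomes the paper's proof.
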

\begin{remark}
  {The condition $0\le\gamma<1$ is inherited from the exact small-ball theorem used in Proposition~\ref{small-ball:u}; see \cite[Theorem~4.3(1)]{GSWX2025}. The additional restriction $\beta+\gamma<2+H$ is needed to control the smoother Gaussian remainder in that result. Extending the exact small-ball asymptotic, and hence the Chung-type LIL, beyond either restriction would require a separate argument.}
\end{remark}

Several recent works are closely related to the present study. Building on the framework of Lee and Xiao \cite{LX2023}, Qian et al. \cite{QWWX2026} used local linearization and truncated harmonizable representations to establish a Khinchin-type LIL at $t=0$. For the Chung-type LIL, Khoshnevisan, Kim, and Mueller \cite{KKM24} treated the stochastic heat equation driven by space-time white noise. Their argument combines small-ball estimates for the fractional Brownian component and the smoother remainder in the pinned-string decomposition with a localization method. Liu and Wang \cite{LW2026} later extended this approach to the linear stochastic fractional heat equation driven by noise that is white in time and fractional in space, obtaining a Chung-type LIL at $t=0$.

A further motivation for studying the initial-time behavior of the linear equation comes from local linearization results for nonlinear parabolic SPDEs. A central principle is that, at sufficiently small scales, the increments of a nonlinear solution are often approximated to leading order by those of the corresponding linear equation. For instance, Foondun, Khoshnevisan, and Mahboubi \cite{FKM2015} characterized the spatial approximate gradient of a nonlinear stochastic fractional heat equation through increments of fractional Brownian motion. Hairer and Pardoux \cite{HP15} derived a sharp local expansion for nonlinear parabolic SPDEs, with a leading linearized term and a higher-order remainder.

In the temporal setting, Khoshnevisan et al. \cite{KSXZ2013} developed a local approximation for stochastic fractional heat equations with multiplicative noise: at a fixed positive space-time point, a temporal increment of the nonlinear solution is approximated by the corresponding increment of the linear equation multiplied by the local noise coefficient, with a higher-order error controlled in moments. Related local approximation and linearization results can be found in \cite{CHKK19,DNP2025,Das2022,HK2017,KKM24,QWWX2026}. The Khinchin-type and Chung-type LILs established here therefore provide sharp initial-time benchmarks for the linear equation and may serve as a starting point for studying the corresponding initial-time behavior of nonlinear stochastic heat-type equations.

The remainder of the paper is organized as follows. In Section~2, we introduce the covariance structure of the noise and the associated stochastic integral, and establish the H\"older continuity and a harmonizable-type representation of the solution. In Section~3, we prove the Khinchin-type LIL. In Section~4, we establish the required small-ball estimate and prove the Chung-type LIL.

\section{Preliminaries}

\subsection{Covariance structure and stochastic integration}
  {The spatial covariance is specified directly through the nonnegative tempered spectral measure}
\[
  {m(d\xi):=|\xi|^{\ell-d}\,d\xi,\qquad 0<\ell<2d.}
\]
  {When $0<\ell<d$, this measure is, up to the normalization determined by the Fourier convention, associated with the Riesz kernel $|x|^{-\ell}$. When $d\le\ell<2d$, its inverse Fourier transform is understood as a tempered distribution. In both cases, the stochastic integral is defined through the Hilbert-space completion induced by the covariance form below.}

We adopt the noise model introduced in \cite[p. 7]{CLX2026}. Let $(\Omega, \mathcal F, \mathbb P)$ be a complete probability space and let $\mathcal D(\RR\times\RR^d)$ denote the space of real-valued infinitely differentiable functions with compact support in $\RR\times\RR^d$. Recall \eqref{e:covariance-noise-2}. The noise $\dot{W}$ is a zero-mean Gaussian family $\{W(\varphi); \varphi\in\mathcal D(\RR\times\RR^d)\}$, whose covariance structure is also given by
\begin{equation}\label{e:covariance-noise-1} 
\EE[W(\varphi)W(\psi)]:=\begin{cases}\displaystyle
	\int_{\RR}\int_{\RR^d}\cF\varphi(t,\cdot)(\xi) \overline{\cF\psi(t,\cdot)(\xi)} |\xi|^{\ell-d}d\xi dt, & H_0=1/2, \\[2.5ex]
	\displaystyle
	c_{H_0}\int_{\RR}\int_{\RR}dsdt\int_{\RR^d} |t-s|^{2{H_0}-2}\cF\varphi(t,\cdot)(\xi) \overline{\cF\psi(s,\cdot)(\xi)} |\xi|^{\ell-d}d\xi, & H_0\in(1/2,1).
\end{cases}
\end{equation}
Here,
\[
  {c_{H_0}:=2^{2-2H_0}\pi^{1/2}\frac{\Gamma(1-H_0)}{\Gamma(H_0-1/2)}.}
\]
\textcolor{black}{The constant $c_{H_0}$ is chosen so that} the time-domain and frequency-domain covariance representations agree under our Fourier convention. More precisely, for suitable test functions $f,g$ and $H_0\in(1/2,1)$,
\[
 c_{H_0}\int_{\mathbb R^2}|t-s|^{2H_0-2}f(t)g(s)dtds
 =(2\pi)^{-1}\int_{\mathbb R}|\tau|^{1-2H_0} 
 \cF f(\cdot)(\tau)  \overline{ \cF g(\cdot)(\tau) }d\tau .
\]
See \cite[p. 3]{TX17} and \cite[p. 60]{BT08} for more information.  

When $H_0=1/2$ and $\ell=d$, the noise $\dot W$ reduces to space-time white noise.
Let $\mathcal H$ be the completion of $\mathcal D(\RR\times\RR^d)$ with the inner product
$$
\langle \varphi,\psi \rangle_\cH=\EE[W(\varphi)W(\psi)].
$$
\textcolor{black}{The isometry $\varphi\mapsto W(\varphi)$ extends uniquely to $\mathcal H$; the resulting random variable is called the Wiener integral of $\varphi$.} For each $\varphi\in\mathcal H$, we also use the notation
$$
W(\varphi)=\int_{\RR}\int_{\RR^d}\varphi(t,x)W(dt,dx).
$$

\subsection{H\"older continuity of the solution}\label{sec regularity}

If
$G(t-*,x-\cdot)\bm{1}_{[0,t]}(\cdot)\in\cH$ for all $(t,x)\in[0,\infty)\times\RR^d$, then the solution $\{u(t,x):(t,x)\in[0,\infty)\times\RR^d\}$ is a centered Gaussian process with covariance function
\begin{equation}\label{cov:u}
\ee [u(t,x)u(s,y)]=\langle G(t-*,x-\cdot)\bm{1}_{[0,t]}(*),G(s-*,y-\cdot)\bm{1}_{[0,s]}(*) \rangle_\cH.
\end{equation} 

The Fourier transform of $G(t, x)$ in space is given by
\begin{equation}\label{eq Fourier p}
\cF G(t,\cdot)(\xi)=t^{\beta+\gamma-1}E_{\beta,\beta+\gamma}\left(- t^\beta|\xi|^\alpha\right),
\end{equation}
\textcolor{black}{where $E_{\beta,\beta+\gamma}$ is the two-parameter Mittag--Leffler function} (see, e.g., \cite[Sect.1.2]{P1999}):
\begin{equation}\label{def-MLF}
E_{a, b}(z):=\sum_{k=0}^{\infty}\frac{z^k}{\Gamma(a k+b)},\,\,\text{for all}\,\, a\in\RR_+,b\in\RR,\,\,\text{and}\,\,z\in\mathcal C.
\end{equation}
The reciprocal Gamma function is understood through its entire extension; in particular, we use the convention (see, e.g., \cite[(5.2.1) on p. 136]{FDRC2010})
\[
1/\Gamma(z)\equiv0\quad\text{for}\quad z=0,-1,-2,\cdots.
\]

The Mittag--Leffler function defined in \eqref{def-MLF} satisfies the following properties:
\begin{itemize}
\item[(1)]\cite[Theorem 1.6]{P1999}
  {For every fixed $a\in(0,2)$ and $b\in\mathbb R$, there exists a constant $C_{a,b}>0$ such that, for all $x>0$,}
\begin{equation}\label{Upper:E}
  {|E_{a,b}(-x)|\leq \frac{C_{a,b}}{1+x}.}
\end{equation}
  {All Mittag--Leffler parameters used below are fixed, so the corresponding constants are absorbed into the generic constants $c_{i,j}$.}
\item[(2)]
\cite[(2.5)]{CLX2026} For any $a>0, b,\lambda\in\RR$, and $m=1,2,3,\cdots$, \textcolor{black}{we have}
\begin{equation}\label{par}
	\frac{d^m}{dx^m}\left(x^{b-1}E_{a,b}(\lambda x^a)\right)=x^{b-1-m}E_{a,b-m}(\lambda x^a).
\end{equation}
\end{itemize}

\begin{lemma}\label{solution:pro}
Recall that $H$ is defined in \eqref{def:H}. Suppose that  Condition \ref{hypo}  holds  and that $H<1$.  Then, for every fixed $x\in\mathbb{R}^d$, the following assertions hold.
\begin{itemize}
	\item[(1)] For every $t\ge0$,
	\begin{equation}\label{u-L2}
		\|u(t,x)\|_{L^2(\Omega)}^2
		=
		\textcolor{black}{\widetilde{\kappa}}^2 t^{2H},
	\end{equation}
	where $\textcolor{black}{\widetilde{\kappa}}$ is given by \eqref{def:tilde:kappa}.
	
	\item[(2)] The process $\{u(t,x)\}_{t\ge0}$ is a centered Gaussian self-similar process with Hurst index $H$.
\end{itemize}

\end{lemma}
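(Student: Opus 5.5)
The plan is to compute the covariance of $t\mapsto u(t,x)$ directly from \eqref{cov:u}, using the spatial Fourier transform \eqref{eq Fourier p} of $G$, and then read off both assertions by a change of variables. Since $x$ is fixed, the spatial phase factors $e^{-i(x-y)\cdot\xi}$ cancel when $y=x$. For $H_0=1/2$, \eqref{e:covariance-noise-1} gives
\[
\ee[u(t,x)u(s,x)]=\int_0^{t\wedge s}dr\int_{\RR^d}|\xi|^{\ell-d}(t-r)^{\beta+\gamma-1}E_{\beta,\beta+\gamma}\bigl(-(t-r)^\beta|\xi|^\alpha\bigr)(s-r)^{\beta+\gamma-1}E_{\beta,\beta+\gamma}\bigl(-(s-r)^\beta|\xi|^\alpha\bigr)\,d\xi .
\]
For $H_0\in(1/2,1)$ the formula is the analogous double integral over $r_1\in[0,t]$, $r_2\in[0,s]$, with weight $c_{H_0}|r_1-r_2|^{2H_0-2}$. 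Before using this formula I must check that $G(t-*,x-\cdot)\bm 1_{[0,t]}\in\cH$. This amounts to finiteness of the integrals below, and it is exactly \cite[Theorem~4.1(i)]{CLX2026} (equivalently, $\widetilde\kappa<\infty$, already quoted after Theorem~\ref{Khinchin:u}).

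For (1), set $s=t$ and substitute $r\mapsto t-r$, then $r=t\sigma$ and $\xi=t^{-\beta/\alpha}\eta$. The arguments of the Mittag--Leffler functions become $-\sigma^\beta|\eta|^\alpha$. Counting powers of $t$ gives:
\begin{itemize}
\item $t^{2(\beta+\gamma-1)}$ from the two prefactors;
\item $t$ from $dr$;
\item $t^{-\beta\ell/\alpha}$ from $|\xi|^{\ell-d}d\xi$.
\end{itemize}
The total exponent is $2(\beta+\gamma-1)+1-\beta\ell/\alpha$, which equals $2H$ when $H_0=1/2$. For $H_0>1/2$ there are two time integrals contributing $t^2$, and the kernel $|r_1-r_2|^{2H_0-2}$ contributes $t^{2H_0-2}$. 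The exponent is then $2(\beta+\gamma-1)+2H_0-\beta\ell/\alpha=2H$ again. The remaining integral is exactly $\widetilde\kappa^2$ as in \eqref{def:tilde:kappa}, after reflecting $\sigma\mapsto1-\sigma$ where needed. At $t=0$ both sides vanish.

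For (2), Gaussianity and centering are immediate, since $u(t,x)$ is a Wiener integral and the family is jointly Gaussian as the image of the isometry. For self-similarity it suffices, by Gaussianity, to show $\ee[u(ct,x)u(cs,x)]=c^{2H}\ee[u(t,x)u(s,x)]$ for all $c>0$ and $s,t\ge0$. I will apply the same substitution $r=cr'$, $\xi=c^{-\beta/\alpha}\eta$ (and likewise for $r_1,r_2$) to the general covariance formula. This turns $(ct-r)^{\beta+\gamma-1}E_{\beta,\beta+\gamma}(-(ct-r)^\beta|\xi|^\alpha)$ into $c^{\beta+\gamma-1}(t-r')^{\beta+\gamma-1}E_{\beta,\beta+\gamma}(-(t-r')^\beta|\eta|^\alpha)$, and the factor $c^{2H}$ comes out by the same exponent count as in (1).

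The only delicate point is justifying the interchange and convergence of the integrals, i.e.\ membership in $\cH$. I would handle it as follows. By \eqref{Upper:E}, the $\xi$-integrand is bounded by $|\xi|^{\ell-d}(1+\sigma^\beta|\xi|^\alpha)^{-2}$, which is integrable because $0<\ell<2\alpha$. The resulting power $\sigma^{2(\beta+\gamma-1)-\beta\ell/\alpha}$ is integrable near $0$ because $H>0$. For $H_0>1/2$ I would use Cauchy--Schwarz together with the Hardy--Littlewood--Sobolev inequality. Alternatively I would simply cite \cite{CLX2026} for this step.
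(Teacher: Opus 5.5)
Your proposal is correct and follows essentially the same route as the paper: part (2) is proved there by exactly your scaling substitution $r_i=\rho\widetilde r_i$, $\widetilde\xi=\rho^{\beta/\alpha}\xi$ in the covariance formula, with the powers of $\rho$ cancelling by the definition of $H$. The only difference is that the paper obtains (1) by citing \cite[Theorem~4.1(i)]{CLX2026}, whereas you derive it directly by the same change of variables with $s=t$, which is a harmless and more self-contained variation.
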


\begin{proof}
 \textcolor{black}{Assertion (1) follows from \cite[Theorem 4.1(i)]{CLX2026}.} 
We now prove assertion (2). Since $\{u(t,x)\}_{t\ge0}$ is a centered Gaussian
process, it is enough to verify the corresponding scaling property of its
covariance function. We give the proof for $H_0\in(1/2,1)$; the case
$H_0=1/2$ is analogous.

Let $\rho>0$ and $t,s\ge0$. By \eqref{e:covariance-noise-1} and
\eqref{eq Fourier p}, we have
\[
\begin{aligned}
&\mathbb E\left[
	\rho^{-H}u(\rho t,x)\,
	\rho^{-H}u(\rho s,x)
\right]  \\
&=
c_{H_0}\rho^{-2H}
\int_0^{\rho t}\!\!\int_0^{\rho s}
|r_1-r_2|^{2H_0-2}
\int_{\mathbb R^d}
\mathcal F G(\rho t-r_1,x-\cdot)(\xi)\,
\overline{\mathcal F G(\rho s-r_2,x-\cdot)(\xi)}
|\xi|^{\ell-d}\,\ud\xi\,\ud r_2\,\ud r_1 .
\end{aligned}
\]
The phase factors depending on $x$ cancel in the product of the two Fourier
transforms. Therefore, using \eqref{eq Fourier p}, the last display becomes
\[
\begin{aligned}
&c_{H_0}\rho^{-2H}
\int_0^{\rho t}\!\!\int_0^{\rho s}
|r_1-r_2|^{2H_0-2}
\int_{\mathbb R^d}
(\rho t-r_1)^{\beta+\gamma-1}
E_{\beta,\beta+\gamma}
\left(-(\rho t-r_1)^\beta|\xi|^\alpha\right)    \\
&\qquad\qquad\qquad\qquad \times
(\rho s-r_2)^{\beta+\gamma-1}
E_{\beta,\beta+\gamma}
\left(-(\rho s-r_2)^\beta|\xi|^\alpha\right)
|\xi|^{\ell-d}\,\ud\xi\,\ud r_2\,\ud r_1 .
\end{aligned}
\]
Set
\[
r_i=\rho \widetilde r_i,\qquad i=1,2,
\qquad
\widetilde\xi=\rho^{\beta/\alpha}\xi .
\]
Then
\[
\ud r_1\,\ud r_2=\rho^2\,\ud\widetilde r_1\,\ud\widetilde r_2,
\qquad
|r_1-r_2|^{2H_0-2}
=
\rho^{2H_0-2}
|\widetilde r_1-\widetilde r_2|^{2H_0-2},
\]
and
\[
|\xi|^{\ell-d}\,\ud\xi
=
\rho^{-\beta\ell/\alpha}
|\widetilde\xi|^{\ell-d}\,\ud\widetilde\xi .
\]
Using the definition
\[
H=\beta+\gamma+H_0-1-\frac{\beta\ell}{2\alpha},
\]
all powers of $\rho$ cancel. Hence
\[
\begin{aligned}
&\mathbb E\left[
	\rho^{-H}u(\rho t,x)\,
	\rho^{-H}u(\rho s,x)
\right] \\
&=
c_{H_0}
\int_0^t\!\!\int_0^s
|\widetilde r_1-\widetilde r_2|^{2H_0-2}
\int_{\mathbb R^d}
(t-\widetilde r_1)^{\beta+\gamma-1}
E_{\beta,\beta+\gamma}
\left(-(t-\widetilde r_1)^\beta|\widetilde\xi|^\alpha\right) \\
&\qquad\qquad\qquad\qquad \times
(s-\widetilde r_2)^{\beta+\gamma-1}
E_{\beta,\beta+\gamma}
\left(-(s-\widetilde r_2)^\beta|\widetilde\xi|^\alpha\right)
|\widetilde\xi|^{\ell-d}\,\ud\widetilde\xi\,
\ud\widetilde r_2\,\ud\widetilde r_1  \\
&=
\mathbb E\left[u(t,x)u(s,x)\right].
\end{aligned}
\]

When $H_0=1/2$, the double time integral is replaced by a single time integral; the same substitutions $r=\rho\tilde r$ and $\tilde\xi=\rho^{\beta/\alpha}\xi$ give the factor $\rho^{2H}$.

Thus, for every $\rho>0$, the centered Gaussian processes
\[
\left\{\rho^{-H}u(\rho t,x):t\ge0\right\}
\quad\text{and}\quad
\left\{u(t,x):t\ge0\right\}
\]
have the same covariance function, and hence the same finite-dimensional
distributions.   
The proof is complete.
\end{proof}

\textcolor{black}{By \cite[Proposition 5.1(i)]{CLX2026}}, for any $0<S<T$,  there exists a positive constant $C_{S,T}$,  depending only  on $S$ and $T$,  such that for all $t,s\in[S, T]$ and  $x\in\mathbb{R}^d$,
\begin{equation}\label{CLX-Upper}
\|u(t,x)-u(s,x)\|_{L^2(\Omega)}\le C_{S,T}|t-s|^H.
\end{equation}
\textcolor{black}{Using the self-similarity} of the process $\{u(t, x)\}_{t\ge0}$, we can further obtain the following H\"older estimate,  where the constant is independent of the lower and upper bounds of the time interval.

\begin{proposition}\label{solution:holder}
Recall that $H$ is defined in \eqref{def:H}. Suppose that  Condition \ref{hypo}  holds  and that $H<1$. Then there exists a constant
$c_{2,1}:=\max\left\{C_{1/2,1}, 2^{H+1}\textcolor{black}{\widetilde{\kappa}}\right\}>0$ such that, for all $t\ge s\ge0$ and $x\in\mathbb{R}^d$,
\begin{equation}\label{equ-metric-d1}
	\|u(t,x)-u(s,x)\|_{L^2(\Omega)}
	\le c_{2,1} |t-s|^H.
\end{equation}
\end{proposition}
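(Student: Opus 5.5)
We combine the local estimate \eqref{CLX-Upper} on $[1/2,1]$ with the self-similarity of Lemma~\ref{solution:pro}(2). The constant $c_{2,1}$ then comes from two cases: a \emph{near} case, where $s$ is comparable to $t$, and a \emph{far} case, where $s$ is small relative to $t$.

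Fix $x$ and $t>s\ge 0$; the case $t=s$ is trivial. Write $\rho=t$. By Lemma~\ref{solution:pro}(2), the process $\{u(\rho r,x)\}_{r\ge0}$ has the same finite-dimensional distributions as $\{\rho^H u(r,x)\}_{r\ge0}$. Since $u(t,x)-u(s,x)$ is a centered Gaussian variable whose variance depends only on the two-dimensional law of $(u(t,x),u(s,x))$, we get
\[
\|u(t,x)-u(s,x)\|_{L^2(\Omega)}=t^H\,\|u(1,x)-u(s/t,x)\|_{L^2(\Omega)} .
\]
It therefore suffices to show that, for all $r\in[0,1)$,
\[
\|u(1,x)-u(r,x)\|_{L^2(\Omega)}\le c_{2,1}(1-r)^H,
\]
because then $t^H(1-s/t)^H=(t-s)^H$.

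\emph{Near case, $r\in[1/2,1)$.} Both times lie in $[1/2,1]$, so \eqref{CLX-Upper} with $S=1/2$, $T=1$ gives the bound $C_{1/2,1}(1-r)^H$ directly.

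\emph{Far case, $r\in[0,1/2)$.} We use the triangle inequality together with \eqref{u-L2}:
\[
\|u(1,x)-u(r,x)\|_{L^2(\Omega)}\le \widetilde\kappa\,(1+r^H)\le 2\widetilde\kappa .
\]
Since $1-r>1/2$, we have $(1-r)^H\ge 2^{-H}$. Hence $2\widetilde\kappa\le 2^{H+1}\widetilde\kappa\,(1-r)^H$.

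Taking the maximum of the constants from the two cases gives exactly $c_{2,1}=\max\{C_{1/2,1},2^{H+1}\widetilde\kappa\}$.

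The main point needing care is the scaling step. Self-similarity is a statement about laws, so we must check that the $L^2$ norm of the increment is determined by the joint law at two times. This holds because that norm equals $\mathbb E[u(t)^2]-2\mathbb E[u(t)u(s)]+\mathbb E[u(s)^2]$, and the proof of Lemma~\ref{solution:pro} established exactly this covariance scaling. A second thing to confirm is that $C_{1/2,1}$ is uniform in $x$. This follows from the statement of \eqref{CLX-Upper}, and also from the fact that the covariance computed in Lemma~\ref{solution:pro} does not depend on $x$, since the phase factors cancel.
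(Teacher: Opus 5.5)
Your proof is correct and follows essentially the same route as the paper. The paper also uses self-similarity together with \eqref{CLX-Upper} on $[1/2,1]$ when $s\ge t/2$, and the crude variance bound from \eqref{u-L2} when $s<t/2$, which gives the same constant $\max\{C_{1/2,1},2^{H+1}\widetilde\kappa\}$. The only cosmetic differences are that you rescale to $t=1$ in both cases and use the $L^2$ triangle inequality, where the paper works directly at $(t,s)$ in the far case with $|a-b|^2\le 2|a|^2+2|b|^2$.
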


\begin{proof}
Fix $x\in\mathbb{R}^d$. By Lemma \ref{solution:pro}, for all $a> b\ge0$ and all $\rho>0$,
\begin{equation}\label{self}
	\mathbb{E}\left[|u(\rho a,x)-u(\rho b,x)|^2\right]
	=
	\rho^{2H}
	\mathbb{E}\left[|u(a,x)-u(b,x)|^2\right].
\end{equation} 
We distinguish two cases.

First, suppose that $s\ge t/2$. Then $s/t\in[1/2,1]$. Taking $\rho=t$, $a=1$, and $b=s/t$ in \eqref{self}, and then using \eqref{CLX-Upper} on the interval $[1/2,1]$, we obtain
\begin{align*}
	\mathbb{E}\left[|u(t,x)-u(s,x)|^2\right]
	= &\, 
	t^{2H}
	\mathbb{E}\left[|u(1,x)-u(s/t,x)|^2\right]   \\
	\le &\, 
	C_{1/2,1}^2 t^{2H} |1-s/t|^{2H}  
	=
	C_{1/2,1}^2 |t-s|^{2H}.
\end{align*}

Next, suppose that $0\le s<t/2$. Then $t-s>t/2$. By \eqref{u-L2} and the elementary inequality $$|a-b|^2\le 2|a|^2+2|b|^2,$$ we have
\begin{align*}
	\mathbb{E}\left[|u(t,x)-u(s,x)|^2\right]
	\le &\, 
	2\mathbb{E}\left[|u(t,x)|^2\right]
	+
	2\mathbb{E}\left[|u(s,x)|^2\right]  \\
	\le &\, 
	2\textcolor{black}{\widetilde{\kappa}}^2 t^{2H}
	+
	2\textcolor{black}{\widetilde{\kappa}}^2 s^{2H} 
	\le
	2^{2H+2}\textcolor{black}{\widetilde{\kappa}}^2 |t-s|^{2H}.
\end{align*}

\textcolor{black}{Taking square roots in the two cases and recalling the definition of $c_{2,1}$, we obtain}
\[
\textcolor{black}{\|u(t,x)-u(s,x)\|_{L^2(\Omega)}\le c_{2,1}|t-s|^H,}
\]
\textcolor{black}{which is exactly \eqref{equ-metric-d1}.} The proof is complete.
\end{proof}

\textcolor{black}{Since $u(\cdot,x)$ is Gaussian, \eqref{equ-metric-d1} implies, for every $p\ge2$,}
\[
\textcolor{black}{\mathbb E|u(t,x)-u(s,x)|^p\le C_p|t-s|^{pH}.}
\]
\textcolor{black}{Choosing $p>1/H$ and applying Kolmogorov's continuity theorem, we obtain a continuous modification of $u(\cdot,x)$ on every compact interval. Throughout the sequel, we work with this continuous modification.}

\subsection{Harmonizable representation for the solution}
Let $G_{t,x}(s,y)=G(t-s,x-y)\bm{1}_{[0,t]}(s)$. \textcolor{black}{A direct computation shows that} the Fourier transform of $G_{t,x}(*,\cdot)$ is 
\begin{equation}\label{Four:G:tx}
\mathcal FG_{t,x}(\tau,\xi)=e^{-ix\cdot\xi}e^{-i\tau t}\int_0^te^{i\tau s}s^{\beta+\gamma-1}E_{\beta, \beta+\gamma}\left(-s^\beta|\xi|^\alpha\right)ds,
\end{equation}
see \cite[(6.8)]{CLX2026}.
Let
$$
\widetilde{W}=\widetilde{W}_1+i\widetilde{W}_2,
$$
where $\widetilde{W}_1$ and $\widetilde{W}_2$ are two independent space-time Gaussian white noises on $\mathbb{R}\times\mathbb{R}^d$. For each $(t,x)\in\mathbb{R}_+\times\mathbb{R}^d$ and $A\in\mathcal{B}(\mathbb{R}_+)$, by \eqref{e:covariance-noise-2}, define
\begin{equation}\label{def:vx}
v(A,t,x)
:=
\frac{1}{\sqrt{2\pi}}
\operatorname{Re}
\iint_{\{|\tau|^H\in A,\,\xi\in\mathbb{R}^d\}}
\mathcal{F}G_{t,x}(\tau,\xi)
|\tau|^{\frac{1-2H_0}{2}}
|\xi|^{\frac{\ell-d}{2}}
\widetilde{W}(d\tau,d\xi).
\end{equation}
Then the centered Gaussian random field
$$
\left\{v(\mathbb{R}_+,t,x):t\ge0,\ x\in\mathbb{R}^d\right\}
$$
has the same finite-dimensional distributions as the solution
$$
\left\{u(t,x):t\ge0,\ x\in\mathbb{R}^d\right\}
$$
to \eqref{e:fde}. Indeed, by \eqref{e:covariance-noise-1} and \eqref{cov:u}, the two centered Gaussian fields have the same covariance function.
\textcolor{black}{For each fixed $(t,x)\in\mathbb{R}_+\times\mathbb{R}^d$, the map}
\[
\textcolor{black}{A\longmapsto v(A,t,x),\qquad A\in\mathcal B(\mathbb R_+),}
\]
\textcolor{black}{is an independently scattered Gaussian random measure. More generally, Gaussian fields obtained by restricting the integral in \eqref{def:vx} to disjoint Borel subsets of the frequency variable $|\tau|^H$ are independent. In particular, the processes used in the block decompositions below are independent whenever their frequency blocks are disjoint; see \cite[Section 8]{CLX2026}.}

\begin{lemma}\label{U:JR}
Recall that $H$ is defined in \eqref{def:H}. Suppose that Condition \ref{hypo}  holds  and that $H<1$.
For $R>0$, define
\begin{equation}\label{def:J}
	J(R):=\int_{\RR^d}\left| \int_0^{R}e^{ir}r^{\beta+\gamma-1}E_{\beta, \beta+\gamma}\left(-r^\beta|\eta|^\alpha\right)dr\right|^2 |\eta|^{\ell-d} d\eta.
\end{equation}
Then there exists a constant $c_{2,2}>0$, independent of $R$, such that
\begin{equation}\label{U:JR:eq}
	J(R)\le c_{2,2}\begin{cases}
		R^{2(H+1-H_0)},  & 0<R\le1, \\[2.5ex]
		1, &  R>1, H<H_0, \\[2.5ex]
		(\log R)^2,&  R>1, H=H_0, \\[2.5ex]
		R^{2(H-H_0)},& R>1, H>H_0.
	\end{cases}
\end{equation}
\end{lemma}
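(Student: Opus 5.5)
The plan is to bound the inner oscillatory integral
\[
I(R,\eta):=\int_0^R e^{ir}r^{b-1}E_{\beta,b}\left(-r^\beta|\eta|^\alpha\right)dr,\qquad b:=\beta+\gamma,
\]
pointwise in $\eta$ using \eqref{Upper:E}, and then integrate against $m(d\eta)=|\eta|^{\ell-d}d\eta$ after a scaling in $\eta$.

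Two exponent identities will be used throughout:
\[
H+1-H_0=b-\frac{\beta\ell}{2\alpha},\qquad H-H_0=b-1-\frac{\beta\ell}{2\alpha}.
\]
Condition~\ref{hypo} gives $0<\ell<2\alpha$ and $H>0$. Together with $H_0<1$, the first identity yields $b>\beta\ell/(2\alpha)>0$.

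\emph{Case $0<R\le1$.} Here I ignore the oscillation. By \eqref{Upper:E},
\[
|I(R,\eta)|\le C\int_0^R \frac{r^{b-1}}{1+r^\beta|\eta|^\alpha}\,dr .
\]
I substitute $r=Rs$ and $\eta=R^{-\beta/\alpha}\zeta$. This produces the factor $R^{2b-\beta\ell/\alpha}=R^{2(H+1-H_0)}$ times the constant
\[
\int_{\mathbb R^d}|\zeta|^{\ell-d}\Bigl(\int_0^1\frac{s^{b-1}}{1+s^\beta|\zeta|^\alpha}ds\Bigr)^2d\zeta ,
\]
and it remains to check that this constant is finite.
\begin{itemize}
\item Near $\zeta=0$, the inner integral is bounded because $b>0$, and $|\zeta|^{\ell-d}$ is locally integrable because $\ell>0$.
\item For $|\zeta|\ge1$, I split the inner integral at $s=|\zeta|^{-\alpha/\beta}$. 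It is then $O(|\zeta|^{-\alpha b/\beta})$ if $b<\beta$, $O(|\zeta|^{-\alpha}\log|\zeta|)$ if $b=\beta$, and $O(|\zeta|^{-\alpha})$ if $b>\beta$.
\item Integrability at infinity then needs $2\alpha b/\beta>\ell$ and $2\alpha>\ell$. Both hold by the remarks above.
\end{itemize}

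\emph{Case $R>1$.} I split $I(R,\eta)=I(1,\eta)+\int_1^R$. The first piece contributes at most $2J(1)\le C$ by the previous case. On $[1,R]$ I integrate by parts, using $e^{ir}=-i\,\frac{d}{dr}e^{ir}$ and \eqref{par} with $m=1$:
\[
\int_1^R e^{ir}f(r)\,dr=\bigl[-ie^{ir}f(r)\bigr]_1^R+i\int_1^R e^{ir}r^{b-2}E_{\beta,b-1}\left(-r^\beta|\eta|^\alpha\right)dr,
\]
where $f(r)=r^{b-1}E_{\beta,b}(-r^\beta|\eta|^\alpha)$. Applying \eqref{Upper:E} to both Mittag--Leffler functions, the three resulting terms are handled as follows.
\begin{itemize}
\item \emph{Boundary term at $r=1$.} It is bounded by $C(1+|\eta|^\alpha)^{-1}$. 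Its square is $m$-integrable because $0<\ell<2\alpha$.
\item \emph{Boundary term at $r=R$.} Its square integrates to
\[
\int_{\mathbb R^d}|\eta|^{\ell-d}\frac{R^{2b-2}}{(1+R^\beta|\eta|^\alpha)^2}\,d\eta=CR^{2b-2-\beta\ell/\alpha}=CR^{2(H-H_0)}.
\]
This is dominated by the claimed bound in each of the three subcases, since $R>1$.
\item \emph{Remaining integral.} I apply Minkowski's integral inequality in $L^2(m)$:
\[
\Bigl\|\int_1^R\frac{r^{b-2}}{1+r^\beta|\cdot|^\alpha}dr\Bigr\|_{L^2(m)}\le\int_1^R r^{b-2}\Bigl(\int_{\mathbb R^d}\frac{|\eta|^{\ell-d}}{(1+r^\beta|\eta|^\alpha)^2}d\eta\Bigr)^{1/2}dr=C\int_1^R r^{H-H_0-1}dr .
\]
The last integral is $O(1)$, $\log R$, or $O(R^{H-H_0})$ according as $H<H_0$, $H=H_0$, or $H>H_0$. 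Squaring gives exactly the three regimes in \eqref{U:JR:eq}.
\end{itemize}
Combining the pieces with $(a_1+\dots+a_4)^2\le4\sum a_i^2$ finishes the proof.

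The main obstacle is the regime $R>1$. A crude absolute-value bound on $\int_1^R$ yields the exponent $b-\beta\ell/(2\alpha)=H+1-H_0$ instead of $H-H_0$. Gaining one power of $r$ therefore requires the integration by parts, and it is important that the derivative formula \eqref{par} keeps a Mittag--Leffler function with the same decay \eqref{Upper:E}. The remaining checks are finiteness of the $\zeta$-integrals, which rely only on $0<\ell<2\alpha$ and $b>\beta\ell/(2\alpha)$.
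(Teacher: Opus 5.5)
Your proposal is the paper's proof in all essentials. It uses the same scaling $r=Rs$, $\eta=R^{-\beta/\alpha}\zeta$ for $R\le1$, the same split of $\int_0^R$ at $r=1$, the same integration by parts on $[1,R]$ via \eqref{par} with $m=1$, and the same Minkowski step reducing the last term to $\bigl(\int_1^R r^{H-H_0-1}\,dr\bigr)^2$. The only variation is in checking that the scaled $\zeta$-integral is finite. The paper separates $\gamma>0$ (bounding the integrand by $u^{\gamma-1}\zeta^{-\alpha}$) from $\gamma=0$ (where the inner integral equals $\beta^{-1}\zeta^{-\alpha}\log(1+\zeta^\alpha)$). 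You instead split at $s=|\zeta|^{-\alpha/\beta}$. Since $b=\beta+\gamma\ge\beta$, your case $b<\beta$ never arises, and the two case distinctions coincide.

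There is one false step, and the paper makes the same one. When $H=H_0$, three contributions are positive constants: the one from $I(1,\eta)$, the boundary term at $r=1$, and the boundary term at $r=R$ (which is $CR^{2(H-H_0)}=C$). So your claim that the last of these is ``dominated by the claimed bound in each of the three subcases, since $R>1$'' fails, because no fixed multiple of $(\log R)^2$ dominates a positive constant as $R\downarrow1$.

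No better argument can repair this, because the inequality itself is false there. The case is not vacuous: for example, $d=1$, $\alpha=2$, $\ell=1$, $\beta=1$, $\gamma=1/4$, $H_0=1/2$ satisfies the hypotheses and gives $H=H_0=1/2$.
\begin{itemize}
\item As $R\downarrow1$, $J(R)\to J(1)$ by dominated convergence. The majorant $C\int_0^2 r^{b-1}(1+r^\beta|\eta|^\alpha)^{-1}dr$ lies in $L^2(m)$ by your own scaling computation, which is valid for every $R>0$.
\item $J(1)>0$, because $\eta\mapsto I(1,\eta)$ is continuous and $\operatorname{Re} I(1,0)=\Gamma(b)^{-1}\int_0^1 r^{b-1}\cos r\,dr>0$.
\item Meanwhile $(\log R)^2\to0$.
\end{itemize}

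What your argument (and the paper's) actually proves for $H=H_0$ is $J(R)\le C\bigl(1+(\log R)^2\bigr)$ for $R>1$. That weaker form is all Proposition~\ref{Harmonizable} uses, since its function $F$ already contains the additive $+1$.
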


\begin{proof} 
Using polar coordinates \(\eta=\rho w\), \(\rho>0\), \(w\in\mathbb S^{d-1}\),
and the identity
\[
        |\mathbb S^{d-1}|=\frac{2\pi^{d/2}}{\Gamma(d/2)},
\]
we obtain
\begin{equation}\label{def:J:eq}
        J(R)
        =
        \frac{2\pi^{d/2}}{\Gamma(d/2)}
        \int_0^\infty
        \left|
        \int_0^{R}
        e^{ir}r^{\beta+\gamma-1}
        E_{\beta,\beta+\gamma}\bigl(-r^\beta\rho^\alpha\bigr)
        \,dr
        \right|^2
        \rho^{\ell-1}\,d\rho .
\end{equation}
\textbf{Step 1.} The case $0<R\le 1$. 
By \eqref{Upper:E} and the change of variables  $u:=R^{-1}r$ and $\zeta:=R^{\frac\beta\alpha}\rho$, we have
\begin{equation*}
	\begin{split}
		J(R)\le&\,  c_{2,3}\int_0^\infty \left|\int_0^R \frac{r^{\beta+\gamma-1}}{1+r^\beta\rho^\alpha}dr\right|^2 \rho^{\ell-1} d\rho\\
		=&\, c_{2,3}R^{2(\beta+\gamma)-\frac{\beta \ell}\alpha}\int_0^\infty \left|\int_0^1 \frac{u^{\beta+\gamma-1}}{1+u^\beta\zeta^\alpha}du\right|^2 \zeta^{\ell-1} d\zeta\\
	\end{split}
\end{equation*}
\textcolor{black}{We claim that}
\begin{equation}\label{integral:1}
	\int_0^\infty\left|\int_0^1 \frac{u^{\beta+\gamma-1}}{1+u^\beta\zeta^\alpha}du\right|^2 \zeta^{\ell-1} d\zeta<\infty.
\end{equation}
Since 
\[
2(\beta+\gamma)-\frac{\beta \ell}\alpha=2(H+1-H_0),
\]
we conclude that there exists a constant $c_{2,6}>0$, independent of $R$, such that for all $0<R\le 1$,
\begin{equation}\label{JR1}
	J(R)\le c_{2,6}R^{2(H+1-H_0)}.
\end{equation}

It remains to verify the claim \eqref{integral:1}.
 We split the integral into two parts:
\begin{equation}\label{integral:1:two}
	\int_0^\infty\left|\int_0^1 \frac{u^{\beta+\gamma-1}}{1+u^\beta\zeta^\alpha}du\right|^2 \zeta^{\ell-1} d\zeta
	=\left(\int_0^1+ \int_1^\infty  \right) \left|\int_0^1 \frac{u^{\beta+\gamma-1}}{1+u^\beta\zeta^\alpha}du\right|^2 \zeta^{\ell-1} d\zeta.
\end{equation}
Since $\beta+\gamma>0$ and $\ell>0$, we have
\begin{equation*}
	\int_0^1\left|\int_0^1 \frac{u^{\beta+\gamma-1}}{1+u^\beta\zeta^\alpha}du\right|^2 \zeta^{\ell-1} d\zeta
	\le \int_0^1\left|\int_0^1 u^{\beta+\gamma-1}du\right|^2 \zeta^{\ell-1} d\zeta= \frac1{\ell(\beta+\gamma)^2}.
\end{equation*}
For the second term in \eqref{integral:1:two}, we consider the  two cases:
\[
\gamma>0\quad\text{and}\quad\gamma=0.
\]
\textbf{(i)} When $\gamma>0$, we have
\begin{align*}
	\int_1^\infty\left|\int_0^1 \frac{u^{\beta+\gamma-1}}{1+u^\beta\zeta^\alpha}du\right|^2 \zeta^{\ell-1} d\zeta
	\le &\, \int_1^\infty\left|\int_0^1 u^{\gamma-1} du\right|^2  \zeta^{\ell-1-2\alpha} d\zeta\\
	=&\,\frac1{\gamma^2}\int_1^\infty\zeta^{\ell-1-2\alpha} d\zeta <\infty,
\end{align*}
\textbf{(ii)} When $\gamma=0$, 
using the change of variables $t:=\log\zeta$ yields
\begin{equation*}
	\begin{split}
		\int_1^\infty\left|\int_0^1 \frac{u^{\beta-1}}{1+u^\beta\zeta^\alpha}du\right|^2 \zeta^{\ell-1} d\zeta
		=&\, \frac1{\beta^2}\int_1^\infty\bigl(\log(1+\zeta^\alpha)\bigr)^2\zeta^{\ell-1-2\alpha}d\zeta\\
		\leq&\,c_{2,5}\int_1^\infty(\log\zeta)^2\zeta^{\ell-1-2\alpha}d\zeta\\
		=&\,c_{2,5}\int_0^\infty t^2e^{-t(2\alpha-\ell)}dt<\infty.
	\end{split}
\end{equation*}
 \textbf{Step 2.} We now consider  $R>1$. By \eqref{def:J:eq} and $|z_1+z_2|^2\le2|z_1|^2+2|z_2|^2$, we have
\begin{equation}\label{J:two}
	\textcolor{black}{J(R)\leq \frac{4\pi^{d/2}}{\Gamma(d/2)}\left(\int_0^\infty|J_1(\rho)|^2\rho^{\ell-1}d\rho
	+\int_0^\infty|J_2(\rho)|^2\rho^{\ell-1}d\rho\right),}
\end{equation}
where
\begin{equation*}
	J_1(\rho):=\int_0^1e^{ir}r^{\beta+\gamma-1}E_{\beta, \beta+\gamma}\left(-r^\beta\rho^\alpha\right)dr,
\end{equation*}
\begin{equation*}
	J_2(\rho):=\int_1^Re^{ir}r^{\beta+\gamma-1}E_{\beta, \beta+\gamma}\left(-r^\beta\rho^\alpha\right)dr.
\end{equation*}

For the first term in \eqref{J:two}, by  \eqref{Upper:E} and \eqref{integral:1}, we have
\begin{equation}\label{J1}
	\int_0^\infty|J_1(\rho)|^2\rho^{\ell-1}d\rho\le c_{2,6}\int_0^\infty\left|\int_0^1 \frac{r^{\beta+\gamma-1}}{1+r^\beta\rho^\alpha}dr\right|^2 \rho^{\ell-1} d\rho<\infty.
\end{equation}

The second term on the right-hand side of \eqref{J:two} is more involved.
By \eqref{par}, we have

\begin{equation*}
\frac{d}{dr}\left[r^{\beta+\gamma-1}E_{\beta,\beta+\gamma}\left(-r^\beta\rho^\alpha\right)\right]
=r^{\beta+\gamma-2}E_{\beta,\beta+\gamma-1}\left(-r^\beta\rho^\alpha\right).
\end{equation*}
\textcolor{black}{Then, by integration by parts, we obtain}
\begin{equation*}
	J_2(\rho)=
	-ie^{iR}R^{\beta+\gamma-1}E_{\beta,\beta+\gamma}\left(-R^\beta\rho^\alpha\right)
	+ie^iE_{\beta,\beta+\gamma}\left(-\rho^\alpha\right)
	+i\int_1^R e^{ir}r^{\beta+\gamma-2}E_{\beta, \beta+\gamma-1}\left(-r^\beta\rho^\alpha\right)dr.
\end{equation*}
This, together with \eqref{Upper:E} and the triangle inequality, implies that
\begin{equation*}
	\int_0^\infty|J_2(\rho)|^2\rho^{\ell-1}d\rho\le c_{2,7}(1+D_1(R)+D_2(R)),
\end{equation*}
where
\[
D_1(R):=\int_0^\infty\left(\frac{R^{\beta+\gamma-1}}{1+R^\beta\rho^\alpha}\right)^2\rho^{\ell-1}d\rho,
\]
\[
D_2(R):=\int_0^\infty\left(\int_1^R\frac{r^{\beta+\gamma-2}}{1+r^\beta\rho^\alpha}dr\right)^2\rho^{\ell-1}d\rho,
\]
and  the integral
\begin{equation*}
	\int_0^\infty\frac{\rho^{\ell-1}}{\left(1+\rho^\alpha\right)^2}d\rho<\infty,
\end{equation*}
\textcolor{black}{is finite because $0<\ell<2\alpha$.}\par
\textbf{Step 3.} Estimates for $D_1(R)$ and $D_2(R)$.\\
\textbf{(i)} For $D_1(R)$, using the change of variables $\zeta:=R^{\frac\beta\alpha}\rho$, we have
\begin{equation}\label{D1}
	D_1(R)
	=R^{2(\beta+\gamma-1)-\frac{\beta \ell}\alpha}\int_0^\infty\frac{\zeta^{\ell-1}}{\left(1+\zeta^\alpha\right)^2}d\zeta
	=c_{2,8}R^{2(H-H_0)}. 	 
\end{equation}  
\textbf{(ii)} For $D_2(R)$, using  Minkowski's inequality yields
\begin{equation*}
	D_2(R)\le \left(\int_1^R r^{\beta+\gamma-2}\left(\int_0^\infty\frac{\rho^{\ell-1}}{\left(1+r^\beta\rho^\alpha\right)^2 }d\rho\right)^{\frac12}   dr\right)^2.
\end{equation*}
By the change of variables $q:=r^{\frac\beta\alpha}\rho$ and $0<\ell<2\alpha$, we have
\[
\int_0^\infty\frac{\rho^{\ell-1}}{\left(1+r^\beta\rho^\alpha\right)^2 }d\rho=r^{-\frac{\beta \ell}\alpha}
\int_0^\infty\frac{q^{\ell-1}}{\left(1+q^\alpha\right)^2}dq=c_{2,8}r^{-\frac{\beta \ell}\alpha}.
\]
Therefore, we have 
\begin{equation}\label{D2}
\begin{split}
	D_2(R)\le &\,c_{2,8}\left(\int_1^R r^{\beta+\gamma-2-\frac{\beta \ell}{2\alpha}}dr\right)^2=c_{2,8}\left(\int_1^Rr^{H-H_0-1}dr\right)^2\\
    =&\, c_{2,9}\begin{cases}
		1,&  H< H_0, \\[2ex]
		(\log R)^2,&  H= H_0, \\[2ex]
		R^{2(H-H_0)}, &  H> H_0.	
	\end{cases}
    \end{split}
\end{equation}

It follows from \eqref{J:two}, \eqref{J1}, \eqref{D1} and \eqref{D2} that there exists a constant $c_{2,10}>0$, independent of $R$, such that for all $R> 1$,
\begin{equation}\label{JR2}
	J(R)\le c_{2,10}\begin{cases}
		1, & H< H_0, \\[1.5ex]
		(\log R)^2, & H= H_0, \\[1.5ex]
		R^{2(H-H_0)}, & H> H_0.
	\end{cases}
\end{equation}
Combining \eqref{JR1} and \eqref{JR2}, we get \eqref{U:JR:eq}. The proof is complete.

\end{proof}

\begin{proposition}\label{Harmonizable}
Recall that $H$ is given in \eqref{def:H}. Assume  that Condition \ref{hypo} and $H<1$ hold.	There exists a constant $c_{2,11}>0$ such that for all $1\le a<b\le\infty$, $0<t\leq a^{-\frac1{H}},x\in\RR^d$,
\begin{equation}\label{U:vx:L2}
\begin{split}
	&\|v(\RR_+,t,x)-v([a,b),t,x)\|_{L^2(\Omega)}\\
	\le &\,  c_{2,11}\left(a^{\frac{1-H_0}{H}}t^{H+1-H_0}+b^{-1}\bm{1}_{\{ 0<t<b^{-1/H}\}} +F(b,t,H,H_0)\bm{1}_{\{t\ge b^{-1/H}\}} \right),
    \end{split}
\end{equation}
\textcolor{black}{where, for $b<\infty$ and $t\ge b^{-1/H}$,}
\begin{equation}\label{def:F}
	F(b,t,H,H_0):=\begin{cases}	
		b^{-1}, &  H<H_0,\\[1.5ex]
		b^{-1}\left(\log (b^{1/H}t) +  \left(\log (b^{1/H}t)\right)^{1/2}+1\right), &  H=H_0,\\[1.5ex]
		b^{-\frac{H_0}H}t^{(H-H_0)}, &  H>H_0.
	\end{cases}
\end{equation}

\end{proposition}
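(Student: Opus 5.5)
By the independent-scattering property of $A\mapsto v(A,t,x)$, the difference $v(\RR_+,t,x)-v([a,b),t,x)$ equals $v([0,a),t,x)+v([b,\infty),t,x)$, the two pieces being independent. By the isometry induced by \eqref{e:covariance-noise-2}, after the $x$-phase cancels,
\[
\|v(A,t,x)\|_{L^2(\Omega)}^2\le \frac1{2\pi}\int_{\{|\tau|^H\in A\}}|\tau|^{1-2H_0}\int_{\RR^d}\left|\int_0^t e^{i\tau s}s^{\beta+\gamma-1}E_{\beta,\beta+\gamma}(-s^\beta|\xi|^\alpha)\,ds\right|^2|\xi|^{\ell-d}\,d\xi\,d\tau .
\]
The plan is to rescale the inner integrals to the function $J$ of Lemma~\ref{U:JR}. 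Substituting $r=|\tau|s$ and $\eta=|\tau|^{-\beta/\alpha}\xi$ (conjugating for $\tau<0$) gives
\[
\int_{\RR^d}|\cdots|^2|\xi|^{\ell-d}d\xi=|\tau|^{-2(\beta+\gamma)+\beta\ell/\alpha}J(|\tau|t)=|\tau|^{-2(H+1-H_0)}J(|\tau|t).
\]
Hence
\[
\|v(A,t,x)\|_{L^2(\Omega)}^2\le C\int_{\{|\tau|^H\in A\}}|\tau|^{-1-2H}J(|\tau|t)\,d\tau .
\]

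\textbf{Low frequencies $A=[0,a)$.} Here $|\tau|<a^{1/H}$ and $t\le a^{-1/H}$, so $|\tau|t\le1$ and the first case of \eqref{U:JR:eq} applies: $J(|\tau|t)\le c_{2,2}(|\tau|t)^{2(H+1-H_0)}$. The integrand becomes $|\tau|^{1-2H_0}t^{2(H+1-H_0)}$. Since $H_0<1$, it is integrable near $0$, and integrating over $|\tau|<a^{1/H}$ gives $C a^{(2-2H_0)/H}t^{2(H+1-H_0)}$. Taking the square root yields the first term.

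\textbf{High frequencies $A=[b,\infty)$ with $b<\infty$.} Here $|\tau|\ge b^{1/H}$. If $t<b^{-1/H}$, split the range at $|\tau|=1/t$.
\begin{itemize}
\item On $b^{1/H}\le|\tau|\le 1/t$, use the small-argument bound again. The integrand $t^{2(H+1-H_0)}|\tau|^{1-2H_0}$ integrates to at most $C t^{2(H+1-H_0)}t^{-(2-2H_0)}=Ct^{2H}\le Cb^{-2}$.
\item On $|\tau|>1/t$, use the large-argument bounds for $J$. When $H<H_0$, $J\le c$ and $\int_{|\tau|>1/t}|\tau|^{-1-2H}d\tau\le Ct^{2H}\le Cb^{-2}$. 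When $H>H_0$, the integrand is $|\tau|^{-1-2H_0}t^{2(H-H_0)}$, which integrates to $Ct^{2H}$. When $H=H_0$, the factor $(\log(|\tau|t))^2$ is harmless after the substitution $\sigma=|\tau|t$ and again gives $Ct^{2H}$.
\end{itemize}
So this regime contributes $b^{-1}$.

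If $t\ge b^{-1/H}$, then $|\tau|t\ge1$ on the whole range, and only the large-argument bounds are needed.
\begin{itemize}
\item $H<H_0$: $\int_{|\tau|\ge b^{1/H}}|\tau|^{-1-2H}d\tau=Cb^{-2}$.
\item $H>H_0$: $t^{2(H-H_0)}\int_{|\tau|\ge b^{1/H}}|\tau|^{-1-2H_0}d\tau=Ct^{2(H-H_0)}b^{-2H_0/H}$, whose square root is $b^{-H_0/H}t^{H-H_0}$.
\item $H=H_0$: compute $\int_{b^{1/H}}^\infty\tau^{-1-2H}(\log(\tau t))^2d\tau$ by setting $\sigma=\log(\tau t)-\log(b^{1/H}t)$ and expanding the square. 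This gives $Cb^{-2}\bigl(L^2+L+1\bigr)$ with $L=\log(b^{1/H}t)$. Its square root is bounded by $b^{-1}(L+L^{1/2}+1)$, which is $F$.
\end{itemize}
The case $b=\infty$ has no high-frequency piece, and the indicator terms can simply be dropped or bounded.

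The only delicate points are the exact exponent bookkeeping in the rescaling identity, namely the equality $2(\beta+\gamma)-\beta\ell/\alpha=2(H+1-H_0)$, and the logarithmic computation in the case $H=H_0$. I expect both to be routine once the reduction to $J$ is in place.
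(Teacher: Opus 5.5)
Your proposal is correct and follows the paper's proof essentially step for step: the same splitting $v(\RR_+,t,x)-v([a,b),t,x)=v([0,a),t,x)+v([b,\infty),t,x)$, the same rescaling to $|\tau|^{-1-2H}J(|\tau|t)$, and the same case analysis according to whether $|\tau|t\le 1$ and whether $b^{1/H}<1/t$. Your sharper bound $Ct^{2H}\le Cb^{-2}$ on $b^{1/H}\le|\tau|\le 1/t$ and your shifted-logarithm substitution are only cosmetic variants of the paper's steps. One remark applies equally to the paper: for $H=H_0$ the large-$R$ bound in Lemma~\ref{U:JR} should really read $c\bigl(1+(\log R)^2\bigr)$, since $J(R)$ stays positive as $R\downarrow 1$ while $(\log R)^2\to 0$. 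The extra constant only adds $Ct^{2H}$ or $Cb^{-2}$ to your high-frequency estimates, and this is already absorbed by the $b^{-1}$ term and the $+1$ in $F$.
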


\begin{proof}
When $b=\infty$, we use the convention $b^{-1}=0$ and $F(\infty,t,H,H_0)=0$. In what follows, we assume that $b<\infty$.
First,
$$
v(\RR_+,t,x)-v([a,b),t,x)=v([0,a),t,x)+v([b,\infty),t,x).
$$
By \eqref{def:vx}, we have
\begin{equation}\label{ee:two:I1:I2}
	\ee[|v(\RR_+,t,x)-v([a,b),t,x)|^2]\le \frac1{\pi}(I_1+I_2),
\end{equation}
where
\begin{equation*}
	I_1:=\int_{|\tau|< a^{\frac1H}}   |\tau|^{1-2H_0} d\tau   \int_{\RR^d}
	\left|\mathcal FG_{t,x}(\tau,\xi)\right|^2 |\xi|^{\ell-d} d\xi,
\end{equation*}
\begin{equation}\label{def:I2}
	I_2:=\int_{|\tau|\ge b^{\frac1H}}   |\tau|^{1-2H_0} d\tau   \int_{\RR^d}
	\left|\mathcal FG_{t,x}(\tau,\xi)\right|^2 |\xi|^{\ell-d} d\xi.
\end{equation}

By \eqref{Four:G:tx} and the change of variables $r:=|\tau|s$, $\eta:=|\tau|^{-\frac\beta\alpha}\xi$, we have
\begin{equation}\label{equ:Rd}
	\begin{split}
		\int_{\RR^d}
		\left|\mathcal FG_{t,x}(\tau,\xi)\right|^2 |\xi|^{\ell-d} d\xi
		=&\,  \int_{\RR^d}\left| \int_0^te^{i\tau s}s^{\beta+\gamma-1}E_{\beta, \beta+\gamma}\left(-s^\beta|\xi|^\alpha\right)ds\right|^2 |\xi|^{\ell-d} d\xi\\
		=&\, |\tau|^{-2(H+1-H_0)}J(|\tau|t),
	\end{split}
\end{equation}
where $J(\cdot)$ is defined in \eqref{def:J}.\\
\textbf{(1)} For $I_1$:
Since $0<t\leq a^{-\frac1{H}}$ and $0< |\tau|<a^{\frac1H}$, we have $0<|\tau|t<1$. In this case, by Lemma \ref{U:JR} and \eqref{equ:Rd}, we have
\begin{equation}\label{I1}
	\begin{split}
	I_1=&\int_{|\tau|<a^{\frac1H}}|\tau|^{1-2H_0-2(H+1-H_0)}J(|\tau|t)d\tau\\
	\le&\, c_{2,2}t^{2(H+1-H_0)}\int_{|\tau|<a^{\frac1H}}|\tau|^{1-2H_0}d\tau\\
	=&\,\frac{c_{2,2}}{1-H_0}a^{\frac{2-2H_0}H}t^{2(H+1-H_0)}.
\end{split}
\end{equation}
\textbf{(2)} For $I_2$: By \eqref{def:I2} and \eqref{equ:Rd}, we divide $I_2$ into the  following two parts:     
\[
I_2\le I_{2,1}+I_{2,2},
\]
where 
\begin{align*}
I_{2,1}:=&\, \int_{\{|\tau|\ge b^{1/H}, 0<|\tau |\le1/t \}    }  |\tau|^{-1-2H}   J(|\tau|t) d\tau,\\
I_{2,2}:= &\, \int_{ \{|\tau|\ge b^{1/H},  |\tau|>1/t \}    }  |\tau|^{-1-2H}   J(|\tau|t) d\tau.
\end{align*}
\textcolor{black}{We distinguish the following two cases:}
\[
0< b^{1/H}<1/t\quad \text{and}\quad b^{1/H}\ge1/t.
\]
\textbf{Case 1:}
When $0< b^{1/H}<1/t$, by Lemma \ref{U:JR}, we have
\begin{equation*}
	\textcolor{black}{I_{2,1}\le 2c_{2,2}\int_{ b^{1/H}}^{1/t}  \tau^{-1-2H}d\tau
	\le 2c_{2,2} \int_{ b^{\frac1H}}^\infty \tau^{-1-2H}d\tau =\frac{c_{2,2}}{H}b^{-2}.}
\end{equation*}
For $I_{2,2}$,  by Lemma \ref{U:JR}, we have 
\begin{itemize}
	\item [(i)] when $H<H_0$,
	\[
	I_{2,2}\le   2c_{2,2}\int_{1/t}^\infty\tau^{-1-2H}d\tau=\frac{c_{2,2}}{H}t^{2H};
	\]
	\item [(ii)]  when $H=H_0$,
	\begin{align*}
	I_{2,2}\le &\, 2c_{2,2}\int_{1/t}^\infty \tau^{-1-2H}(\log (\tau t))^2d\tau\\
    =&\, 2c_{2,2}t^{2H}\int_1^\infty\eta^{-1-2H}(\log\eta)^2 d\eta=c_{2,12}t^{2H};
	\end{align*}
	\item [(iii)] when $H>H_0$,
	\[
	I_{2,2}\le 2c_{2,2} t^{2(H-H_0)}\int_{1/t}^\infty \tau^{-1-2H+2(H-H_0)}d\tau=\frac{c_{2,2}}{H_0}t^{2H},
	\]	
\end{itemize}
where,  in case (ii), we have used the change of variables $\eta:=\tau t$ and the following integral identity:
\begin{equation*}
	\int_a^\infty x^{-1-b}(\log x)^2dx=a^{-b}\left(\frac{(\log a)^2}{b}+\frac{2\log a}{b^2}+\frac2{b^3}\right),\,\,\,\text{for all}\,\,\,a,b>0.
\end{equation*}
\textcolor{black}{Consequently, there exists a constant $c_{2,15}>0$ such that, whenever $0<b^{1/H}<1/t$,}
\begin{equation}\label{I2:1}
	I_2\le c_{2,13}\left(b^{-2}+t^{2H}\right)\le 2c_{2,13} b^{-2}.
\end{equation}
\textbf{Case 2:} When $b^{1/H}\ge1/t$, $I_{2,1}$ vanishes, so \textcolor{black}{it remains to estimate $I_{2,2}$}.
For $I_{2,2}$,  by  Lemma \ref{U:JR}, we have  
\begin{itemize}
	\item [(i)]  when  $H<H_0$,
	\[
	I_{2,2}\le 2c_{2,2}\int_{ b^{\frac1H}}^\infty   \tau^{-1-2H}   d\tau=\frac{c_{2,4}}{H}b^{-2};
	\]
	\item [(ii)] 	when $H= H_0$,
	\begin{equation*}
		\begin{split}
			I_{2,2}\le&\,  2c_{2,2} \int_{ b^{\frac1H}}^\infty \tau^{-1-2H}(\log \tau t)^2d\tau\\
			=&\,2c_{2,2}b^{-2}\left(\frac{(\log (b^{1/H}t))^2}{2H}+\frac{\log (b^{1/H}t)}{2H^2}+\frac1{4H^3}\right);
		\end{split}
	\end{equation*}
	\item [(iii)]  when $H> H_0$, 
	\begin{equation}\label{I2:2}
		I_{2,2}\le 2c_{2,2}t^{2(H-H_0)}\int_{b^{\frac1H}}^\infty 	 \tau^{-1-2H+2(H-H_0)}  d\tau=\frac{c_{2,2}}{H_0}b^{-\frac{2H_0}H}t^{2(H-H_0)}.
	\end{equation}
\end{itemize}
Consequently, there exists a constant $c_{2,14}>0$ such that, whenever $b^{1/H}\ge1/t$,
\[
\textcolor{black}{I_2\le c_{2,14}F(b,t,H,H_0)^2,}
\]
where $F$ is defined in \eqref{def:F}.

\textcolor{black}{Finally, combining \eqref{I1}, \eqref{I2:1}, and the preceding estimate with \eqref{ee:two:I1:I2} and using $\sqrt{x+y}\le\sqrt{x}+\sqrt{y}$ for $x,y\ge0$, we get \eqref{U:vx:L2}.} The proof is complete.
\end{proof}

\textcolor{black}{From now on, for fixed $x\in\mathbb R^d$, we work with the continuous modification of the harmonizable version $u(t,x):=v(\mathbb R_+,t,x)$. By the preceding moment estimate, this version has continuous paths on every compact time interval. The harmonizable version and the stochastic-convolution version are centered Gaussian random elements of $C([0,T])$ with the same finite-dimensional distributions; hence their induced probability measures on $C([0,T])$ coincide. Therefore, all pathwise LIL statements proved below for the harmonizable version also hold for the original continuous stochastic-convolution solution.}

\section{Proof of Theorem \ref{Khinchin:u}}

For a real-valued, centered Gaussian process $\{X(t)\}_{t \in S}$,
denote  
\[
d_X(s, t) := \|X(t) - X(s)\|_{L^2(\Omega)} = \left(\mathbb{E}\left[|X(t) - X(s)|^2\right] \right)^{1/2}, \quad s, t \in S.
\]
We also denote by $ D_X(S)$ the diameter of $S$ in the metric $d_X$, that is
$$
D_X(S) := \sup\left\{d_X(s, t) : s, t \in S\right\}.
$$
Let $N(S, d_X; \varepsilon)$ denote the smallest number of open $d_X$-balls of radius $\varepsilon$ required to cover $S$.

\begin{lemma} \cite[Theorem 2.4]{T1994}\label{Talagrand}
Let $\{X(t)\}_{t\in S}$ be a mean-zero continuous Gaussian process.
Denote $\sigma^2 := \sup_{t\in S} \|X(t)\|_{L^2(\Omega)}^2$. Suppose that  there exist constants $M > \sigma$, $p > 0$, and $0 < \varepsilon_0 \leqslant \sigma$ such that
\[
N(S, d_X, \varepsilon) \leq (M/\varepsilon)^p \quad \text{for all } \varepsilon < \varepsilon_0.
\]
Then, for any $u > \sigma^2[(1 + \sqrt{p})/\varepsilon_0]$, the following holds:
\[
\mathbb{P}\left\{ \sup_{t\in S} X(t) \geq u \right\} \leq \left(\frac{KMu}{\sqrt{p}\,\sigma^2} \right)^p \Phi\left(\frac{u}{\sigma} \right),
\]
where $\Phi(x) = (2\pi)^{-1/2} \int_x^\infty e^{-y^2/2}\,dy$ and $K$ is a universal constant.
\end{lemma}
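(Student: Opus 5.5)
Since this is \cite[Theorem 2.4]{T1994}, the paper simply invokes it. If I had to reprove it, I would use a single-scale covering argument combined with a Gaussian conditioning (regression) step. The aim is to lose only a polynomial factor in $u$ in front of $\Phi(u/\sigma)$, never a constant in the exponent. Fix $u>\sigma^2(1+\sqrt p)/\varepsilon_0$ and set
\[
\varepsilon:=\sigma^2(1+\sqrt p)/u<\varepsilon_0 .
\]
By hypothesis, $S$ is covered by $N\le (M/\varepsilon)^p\le \bigl(Mu/(\sqrt p\,\sigma^2)\bigr)^p$ balls $B_i$ of $d_X$-radius $\varepsilon$ with centers $t_i$. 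By a union bound it suffices to show that each ball satisfies $\mathbb P(\sup_{B_i}X\ge u)\le K_0^p\,\Phi(u/\sigma)$, with $K_0$ universal, after absorbing constants into $K$.

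\textbf{Per-ball estimate.} On a fixed ball $B=B_i$, write
\[
X(t)=a(t)X(t_i)+Y(t),\qquad a(t):=\frac{\mathbb E[X(t)X(t_i)]}{\mathbb E[X(t_i)^2]},
\]
so that $Y$ is a centered Gaussian process independent of $X(t_i)$. Since $d_X(t,t_i)<\varepsilon$, the following hold.
\begin{itemize}
\item[(a)] The increments satisfy $d_Y\le d_X$. Hence $\sup_B\|Y(t)\|_{L^2}\lesssim\varepsilon$, and $N(B,d_Y,\eta)\le (M/\eta)^p$ for $\eta<\varepsilon$.
\item[(b)] One has $a(t)\le 1+\varepsilon/\sigma_i$ with $\sigma_i:=\|X(t_i)\|_{L^2}\le\sigma$. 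If $\sigma_i$ is very small, the trivial bound $\mathbb P(\sup_B X\ge u)\le\mathbb P(\sup_B|Y|\ge u/2)$ suffices.
\end{itemize}
By Dudley's entropy integral and (a), $\mathbb E\sup_B Y\le C\varepsilon\sqrt{p\log(M/\varepsilon)}$. The Borell--TIS inequality then gives
\[
\mathbb P\bigl(\sup_B Y\ge \mathbb E\sup_BY+r\bigr)\le e^{-r^2/(2C\varepsilon^2)} .
\]
Conditioning on $X(t_i)=z$, I would bound
\[
\mathbb P\Bigl(\sup_BX\ge u\Bigr)\le\int \mathbb P\Bigl(\sup_BY\ge u-z\sup_B a\Bigr)\,\frac{e^{-z^2/2\sigma_i^2}}{\sqrt{2\pi}\sigma_i}\,dz .
\]
This is a convolution of a Gaussian of variance $\sigma_i^2\le\sigma^2$ with a sub-Gaussian tail of scale $\varepsilon$ shifted by $\mathbb E\sup_B Y$. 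Completing the square, the overshoot of order $\varepsilon\sqrt p$ multiplies $\Phi(u/\sigma)$ by at most $\exp\bigl(C u\,\varepsilon\sqrt{p\log(M/\varepsilon)}/\sigma^2\bigr)$. The same is true for the factor coming from $\sup a-1=O(\varepsilon/\sigma)$.

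\textbf{Main obstacle.} The delicate point is this exponential loss. With $\varepsilon\asymp\sigma^2\sqrt p/u$ the product $u\varepsilon/\sigma^2$ is of order $\sqrt p$, but the Dudley bound carries a $\sqrt{\log(M/\varepsilon)}$ that would reintroduce a power of $u$. To avoid this, I would not estimate $\sup_B Y$ in one shot. Instead I would iterate the covering at dyadic radii $\varepsilon 2^{-k}$: at level $k$ there are $(2^kM/\varepsilon)^p$ centers, and I would allot them thresholds $u_k=u+\sigma\sqrt{2pk}$-type corrections. Each chain step then costs a factor $2^{p}$ against a Gaussian gain $e^{-ck p}$, so the geometric series converges to $K_0^p$. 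This gives the required per-ball bound, and multiplying by $N\le(Mu/(\sqrt p\,\sigma^2))^p$ yields the stated inequality.
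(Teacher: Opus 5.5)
The paper gives no proof of this lemma. It simply quotes Talagrand's theorem, so at that level your proposal agrees with it. The reconstruction you sketch, however, has two genuine gaps.

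\textbf{Step (b) loses a factor exponential in $u/\sigma$.} The bound $a(t)\le 1+\varepsilon/\sigma_i$ only dominates $\sup_B a(t)X(t_i)$ by a Gaussian of standard deviation $\sigma_i+\varepsilon$. At level $u$ this costs
\[
\frac{\Phi(u/(\sigma+\varepsilon))}{\Phi(u/\sigma)}\approx\exp\bigl(u^2\varepsilon/\sigma^3\bigr)=\exp\bigl((1+\sqrt p)\,u/\sigma\bigr).
\]
This grows with $u/\sigma$, so it is not a factor of the same type as the Dudley overshoot, as you assert. What you need is Cauchy--Schwarz for the regression coefficient: $|a(t)|\,\sigma_i=|\mathrm{Cov}(X(t),X(t_i))|/\sigma_i\le\sigma$. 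Put $\bar a=\sup_B a$ and $\underline a=\inf_B a$. Then $\sup_B a(t)X(t_i)\le\max\{\bar a X(t_i),\underline a X(t_i)\}$, a maximum of two centered Gaussians with standard deviation at most $\sigma$. Independence of $Y$ then gives $\mathbb P(\sup_B X\ge u)\le C\,\Phi(u/\sigma)\,\mathbb E\exp(2uW/\sigma^2)$, where $W:=\sup_B Y\ge 0$, with no loss from the drift.

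\textbf{The per-ball reduction is false.} You correctly diagnose that one-shot Dudley loses a $\sqrt{\log(M/\varepsilon)}$. But reducing to a per-ball bound $\mathbb P(\sup_{B_i}X\ge u)\le K_0^p\Phi(u/\sigma)$ asks for something false, because the hypothesis controls only global covering numbers and one $\varepsilon$-ball can carry essentially all of them. Take $\sigma=\varepsilon_0=p=1$, $\eta=1/u$, and $X_j=\sqrt{1-\eta^2/2}\,g_0+(\eta/\sqrt2)\,g_j$ for $1\le j\le N$, with $g_0,g_1,\dots$ i.i.d.\ standard normal.
\begin{itemize}
\item All variances equal $1$ and all mutual distances equal $\eta<\varepsilon=2/u$. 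So one ball covers $S$, and the hypothesis holds with $M=N$.
\item For large $N$, the event $\{\max_j g_j\ge\sqrt{2\log N}-\sqrt2\}$ has probability at least $1/2$ and is independent of $g_0$.
\item Hence $\mathbb P(\max_jX_j\ge u)\ge c\,e^{\sqrt{\log N}}\Phi(u)$ whenever $\log N\le u^2$, so no universal $K_0$ exists.
\end{itemize}

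Your dyadic fix cannot repair this. The count $(2^kM/\varepsilon)^p$ is global, so the ratio $2^{kp}$ to the level-$0$ count holds only on average over balls, not per ball. Union bounds over chain links also do not carry the factor $\Phi(u/\sigma)$, and thresholds of the form $u+\sigma\sqrt{2pk}$ are not implied by $\{\sup X\ge u\}$.

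\textbf{What is missing.} You need to sum over cells before estimating. One way to do this:
\begin{itemize}
\item Link $\varepsilon2^{-k}$-nets $T_k$ into a tree, attaching each point of $T_{k+1}$ to a nearest point of $T_k$.
\item Let $D_i$ be the descendants of $t_i\in T_0$, and let $n_{i,k}$ be their number in $T_k$.
\item Chain $Y$ along the tree and apply H\"older across levels with weights $w_k=2^{-k-1}$. This gives $\mathbb E\exp(2uW_i/\sigma^2)\le e^{C(1+p)}\prod_k\max\{n_{i,k+1},1\}^{w_k}$.
\item Sum over cells with the generalized H\"older inequality:
\[
\sum_i\prod_{k\ge0}\max\{n_{i,k+1},1\}^{w_k}\le\prod_{k\ge0}\Bigl(\sum_i\max\{n_{i,k+1},1\}\Bigr)^{w_k}\le 2\Bigl(\frac{4M}{\varepsilon}\Bigr)^p .
\]
\end{itemize}
This last step lets light cells pay for heavy ones and yields the lemma up to absolute constants.

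For the paper's own use, $p=1/H$ and $u/\sigma\asymp\sqrt{\log n}$, so any loss subpolynomial in $n$ is harmless there. Even your one-ball Dudley estimate would therefore suffice for that Borel--Cantelli step, but it does not prove the lemma as stated.
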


The following Gaussian estimate is standard:
\begin{equation}\label{Phi}
\frac{1}{2\sqrt{2\pi}x}e^{-x^2/2} \leqslant \Phi(x) \leqslant \frac{1}{\sqrt{2\pi}}e^{-x^2/2} \quad \text{for all } x \geq 1.
\end{equation}

\begin{proof}[Proof of Theorem \ref{Khinchin:u}]
The proof of Theorem \ref{Khinchin:u} is inspired by Qian et al. \cite[Proposition 5.1(a)]{QWWX2026}.
Fix $x\in\RR^d$. \textcolor{black}{For $0<r<e^{-1}$, define}
$$
L(r):=\sup_{0<t\le r}\frac{|u(t,x)|}{t^{H}\sqrt{2\log\log(1/t)}}.
$$
\textcolor{black}{Since $L(r)\downarrow\limsup_{t\downarrow0}|u(t,x)|/[t^H\sqrt{2\log\log(1/t)}]$ as $r\downarrow0$, it is enough to prove that}
\begin{equation}\label{U:Lr}
	\lim_{r\rightarrow 0^+}L(r)\le \widetilde{\kappa},\,\,\text{a.s.},
\end{equation}
\begin{equation}\label{L:Lr}
	\lim_{r\rightarrow 0^+}L(r)\ge \textcolor{black}{\widetilde{\kappa}},\,\,\text{a.s.}
\end{equation}
We establish the two bounds separately.\\
\textbf{Upper bound.} Let $a>1$ and $\zeta>0$ be constants. For each $n\ge1$, let
\begin{equation*}
	r_n:=a^{-n}\,\,\,\text{and}\,\,\,\theta_n:=(1+\zeta)\textcolor{black}{\widetilde{\kappa}}r_n^{H}\sqrt{2\log\log(1/r_n)}.
\end{equation*}
Consider the event
$$
A_n:=\left\{\sup_{0<t\le r_n}|u(t,x)|> \theta_n\right\}.
$$
\textcolor{black}{We apply Lemma~\ref{Talagrand} to estimate $\mathbb P(A_n)$.} For any $n\ge 1$, by \eqref{u-L2}, we have
$$
\sigma^2:=\sup_{0< t\le r_n}\|u(t,x)\|_{L^2(\Omega)}^2=\textcolor{black}{\widetilde{\kappa}}^2r_n^{2H}.
$$
For every $\varepsilon>0$, let $s_i=i(\varepsilon/c_{2,1})^{1/H}$ for $i=0,1,\ldots,N$, where $N$ is the smallest integer such that $s_N\ge r_n$. Then, by Proposition \ref{solution:holder},  we have
$$
d_u(s_i,s_{i-1})\le \varepsilon.
$$
Then, for any $0<\varepsilon<\sigma$, the covering number $N([0,r_n],d_u,\varepsilon)$ satisfies
\begin{equation}\label{cov:num}
N([0,r_n],d_u,\varepsilon)\le 2r_n\left(\frac{\varepsilon}{c_{2,1}}\right)^{-\frac1H}.
\end{equation}
  {Indeed, $N\le r_n(c_{2,1}/\varepsilon)^{1/H}+1$. Since $\varepsilon<\sigma=\widetilde\kappa r_n^H$ and $c_{2,1}\ge 2^{H+1}\widetilde\kappa$, the first term on the right is larger than $1$, and hence $N\le2r_n(c_{2,1}/\varepsilon)^{1/H}$.}

\textcolor{black}{For all sufficiently large $n$,} $\theta_n>\textcolor{black}{\widetilde{\kappa}}r_n^{H}(1+\sqrt{1/{H}})$. Applying Lemma \ref{Talagrand} with $\varepsilon_0=\sigma$, $p=\frac1{H}$ and  $M=c_{2,1}2^H r_n^H>\sigma$, we have
$$
\mathbb P(A_n)\le 2\left(\frac{K2^{H}c_{2,1}\theta_n} { \sqrt{1/{H}}\textcolor{black}{\widetilde{\kappa}}^2 r_n^{H} } \right)^{\frac1{H}}\Phi\left(\frac{\theta_n}{\textcolor{black}{\widetilde{\kappa}} r_n^{H}}\right),
$$
where $\Phi(x) = (2\pi)^{-1/2} \int_x^\infty e^{-y^2/2}dy$ and $K$ is the universal constant in Lemma \ref{Talagrand}.

By the estimate \eqref{Phi}, there exists a constant \(c_{3,1}>0\) such that,
for all sufficiently large \(n\),
\[
        \mathbb P(A_n)
        \le
        c_{3,1}(\log n)^{\frac1{2H}} n^{-(1+\zeta)^2}.
\]
Since \((1+\zeta)^2>1\), it follows that
\[
        \sum_{n=1}^{\infty}\mathbb P(A_n)<\infty .
\]
Therefore, by the Borel--Cantelli lemma, almost surely,
\[
        \limsup_{n\to\infty}
        \sup_{0<t\le r_n}
        \frac{|u(t,x)|}
        {r_n^{H}\sqrt{2\log\log(1/r_n)}}
        \le
        (1+\zeta)\widetilde\kappa, 
\]
Consequently, for every $r_{n+1}<t\le r_n$, since $r_n/r_{n+1}=a$, we have for all sufficiently large $n$, 
\[
\sup_{r_{n+1}<t\le r_n}
\frac{|u(t,x)|}{t^H\sqrt{2\log\log(1/t)}}
<
\sup_{0<t\le r_n}\frac{|u(t,x)|}{r_{n+1}^H\sqrt{2\log\log(1/r_n)}}
\le a^H(1+\zeta)\widetilde\kappa,\qquad \text{a.s.}
\]
Taking the limsup over $t\downarrow0$ and then letting $a\downarrow1$ and $\zeta\downarrow0$ along rational sequences, we get \eqref{U:Lr}.

\textbf{Lower bound.} Fix $0<\varepsilon<1$. Let $0<\delta<1$ be a small fixed number (depending on $\varepsilon$) to be determined later. For each $n\ge1$, let
\begin{equation}\label{def:tn:rhon}
	t_n:=\rho_n^{\frac1{H}}\,\,\text{with}\,\,\,\,\rho_n:=\exp\left(-\left(n^\delta+n^{1+\delta}\right)\right).
\end{equation}
\textcolor{black}{By Proposition~\ref{Harmonizable}}, we can write $u(t,x)=v_n(t)+\tilde{v}_n(t)$, where
$$
v_n(t):=v([b_n,b_{n+1}),t,x),\,\,\,\tilde{v}_n(t):=v(\RR_+\setminus [b_n,b_{n+1}),t,x),
$$
with $b_n:=\exp\left(n^{1+\delta}\right)$.

For every fixed $x\in\RR^d$, it suffices to prove the following two assertions:
\begin{equation}\label{U:vn}
	\limsup_{n\rightarrow\infty}\frac{|v_n(t_n)|}{t_n^{H}\sqrt{2\log\log(t_n^{-1})}}\ge (1-\varepsilon)\textcolor{black}{\widetilde{\kappa}},\,\,\text{a.s.},
\end{equation}
and
\begin{equation}\label{L:tilde:vn}
	\limsup_{n\rightarrow\infty}\frac{|\tilde{v}_n(t_n)|}{t_n^{H}\sqrt{2\log\log(t_n^{-1})}}\le \varepsilon,\,\,\text{a.s.}
\end{equation}

To prove \eqref{U:vn}, for each $n\ge1$, we define the event
$$
B_n:=\left\{|v_n(t_n)|\ge(1-\varepsilon)\textcolor{black}{\widetilde{\kappa}} t_n^{H} \sqrt{2\log\log(t_n^{-1})}  \right\}.
$$
Let $D_n([0,t_n])$ denote the diameter of the interval $[0,t_n]$ with respect to the canonical metric $$d_{\tilde{v}_n}:=\|\tilde{v}_n(s)-\tilde{v}_n(t)\|_{L^2(\Omega)},$$ where $t_n$ is given by \eqref{def:tn:rhon}.
Then, by Proposition \ref{Harmonizable}, we have
\begin{equation*}
	\sup_{t\in[0,t_n]}\|\tilde{v}_n(t)\|_{L^2(\Omega)}
	\leq c_{2,11}\left({b_n}^{\frac{1-H_0}{H}}{t_n}^{H+1-H_0}+{b_{n+1}}^{-1} + F(b_{n+1},t_n, H, H_0)\right),
\end{equation*}
where $F$ is given by \eqref{def:F}. 

Note that  $b_n\rho_n=\exp\left(-n^\delta\right)$. Also, by the mean value theorem,
$$
(n+1)^{1+\delta}-n^{1+\delta}\ge (1+\delta)n^\delta,
$$
which implies
$b_{n+1}\rho_n\ge\exp\left(\delta n^\delta\right)$. We have the following estimates:
\begin{itemize} 
	\item [(i)]
	\begin{equation*}
		{b_n}^{\frac{1-H_0}{H}}{t_n}^{H+1-H_0}=\rho_n(b_n\rho_n)^{\frac{1-H_0}{H}}=\rho_n\exp\left(- \frac{1-H_0}{H}n^\delta  \right),
	\end{equation*}
	\item [(ii)]
	\begin{equation*}
		{b_{n+1}}^{-1}=\rho_n(b_{n+1}\rho_n)^{-1}\le \rho_n\exp\left(-\delta n^\delta\right),
	\end{equation*}
	\item [(iii)] Since 
	\[b_{n+1}^{1/H}t_n=(b_{n+1}\rho_n)^{1/H}\ge\exp\left(\frac\delta H n^\delta\right)\ge 1,
	\]
	there exist  constants $0<\theta<H$ and $c_\theta>0$, depending on $\theta$, such that  when $H=H_0$,
	\begin{equation*}
		\begin{split}
			&\, {b_{n+1}}^{-1}\left(\log \left(b_{n+1}^{1/H}t_n\right) + \left(\log \left(b_{n+1}^{1/H}t_n\right)\right)^{1/2}+1\right)\\
			\le&\, c_\theta {b_{n+1}}^{-1+\frac\theta{H}}{t_n}^{\theta}
			= c_\theta\rho_n(b_{n+1}\rho_n)^{-\left(1-\frac\theta{H}\right)}
			\le c_\theta \rho_n\exp\left(-\left(1-\frac\theta{H}\right)  \delta n^\delta\right),\,\,\, 
		\end{split}	
	\end{equation*}
	and when $H>H_0$,
	\begin{equation*}
		{b_{n+1}}^{-\frac{H_0}H}{t_n}^{H-H_0}=\rho_n(b_{n+1}\rho_n)^{-\frac{H_0}H}\le \rho_n\exp\left(-\frac{H_0}H\delta n^\delta\right).
	\end{equation*}
\end{itemize}
Combining (i)-(iii) above, we see that there exists a constant $c^*>0$ such that
\begin{equation}\label{U:Dn}
	D_n([0,t_n])\le 2\sup_{t\in[0,t_n]}\|\tilde{v}_n(t)\|_{L^2(\Omega)}\le c_{3,2}\rho_n\exp\left(-c^* n^\delta\right).
\end{equation}
By the triangle inequality, \eqref{u-L2} and \eqref{U:Dn}, we have
\begin{equation}\label{L:vn}
\begin{split}
	\|v_n(t_n)\|_{L^2(\Omega)}\ge  &\,  \|u(t_n,x)\|_{L^2(\Omega)}-\|\tilde{v}_n(t_n)\|_{L^2(\Omega)}\\
	\ge &\, \left (\textcolor{black}{\widetilde{\kappa}}-c_{3,2}\exp\left(-c^* n^\delta\right)\right )t_n^{H}.
    \end{split}
\end{equation}
\textcolor{black}{Moreover, $\log\log(t_n^{-1})/\log\log(\rho_n^{-1})\to1$. Together with \eqref{L:vn}, this implies that, for all sufficiently large $n$,}
$$
B_n\supset \left\{|v_n(t_n)|\ge(1-\varepsilon/2)\|v_n(t_n)\|_{L^2(\Omega)}\sqrt{2\log\log(1/\rho_n)} \right\}.
$$
By the standard Gaussian estimate \eqref{Phi}, for all sufficiently large $n$,
$$
\mathbb P(B_n)\ge c_{3,3}(\log n)^{-1/2} n^{-(1-\varepsilon/2)^2(1+\delta)},
$$
where $c_{3,3}\in(0,\infty)$. Choosing $\delta$ sufficiently small so that $(1-\varepsilon/2)^2(1+\delta)\le 1$, it follows that
$$
\sum_{n=1}^\infty\mathbb P(B_n)=\infty.
$$
\textcolor{black}{Since the frequency blocks $[b_n,b_{n+1})$ are pairwise disjoint, the processes $\{v_n\}_{n\ge1}$ are independent. Hence, by the second Borel--Cantelli lemma, we obtain \eqref{U:vn}.}

For \eqref{L:tilde:vn}, we use \eqref{U:Dn} to get that
\begin{equation*}
	\begin{split}
		&\, \mathbb P\left(|\tilde{v}_n(t_n)|\ge \varepsilon
		t_n^{H}\sqrt{2\log\log(t_n^{-1})}\right)\\
		\le&\, \mathbb P\left(|\tilde{v}_n(t_n)|\ge\frac \varepsilon{c_{3,2}}\|\tilde{v}_n(t_n)\|_{L^2(\Omega)}\exp\left(c^* n^\delta\right)
		\sqrt{2\log\log(\rho_n^{-1})}\right).
	\end{split}
\end{equation*}
\textcolor{black}{By the standard two-sided Gaussian tail bound, this probability is at most}
$$
\textcolor{black}{2\exp\left(-\frac{\varepsilon^2}{2c_{3,2}^2}\exp\left(2c^* n^\delta\right)
\log\log(1/\rho_n)\right)\le c_{3,4}n^{-2},}
$$
for all sufficiently large $n$, where $c_{3,4}\in(0,\infty)$. Therefore, the Borel--Cantelli lemma yields \eqref{L:tilde:vn}.

Since $u(t,x)=v_n(t)+\tilde{v}_n(t)$, combining \eqref{U:vn} and \eqref{L:tilde:vn} yields
$$
\limsup_{n\rightarrow\infty}\frac{|u(t_n,x)|}{t_n^{H}\sqrt{2\log\log(t_n^{-1})}}\ge (1-\varepsilon)\textcolor{black}{\widetilde{\kappa}}-\varepsilon,\,\,\text{a.s.}
$$
Letting $\varepsilon\downarrow0$ along a rational sequence, we get \eqref{L:Lr}. The proof is complete.
\end{proof}

\section{Proof of Theorem \ref{chung:u}}

\subsection{  Small-ball estimate for the solution}
We next state a small-ball estimate for the process $\{u(t, x)\}_{t\ge0}$ at a spatial point  $x\in \mathbb R^d$. This estimate will play  a key role in the sequel.  
\begin{proposition}\label{small-ball:u} 
	 Suppose that Condition \ref{hypo} holds, $H<1$,   {$0\le\gamma<1$}, and $\beta+\gamma<2+H$.
Then, for any  fixed $x\in\RR^d$,
\begin{equation}\label{pr:u}
	\lim\limits_{\varepsilon\downarrow0}\varepsilon^{\frac1H}\log
	\bp\left(\sup_{t\in[0,1]}|u(t,x)|\le\varepsilon\right)
	=-\kappa^{\frac1{H}}\lambda_H,
\end{equation}
where $\kappa$ and $\lambda_H$ are given by \eqref{def:kappa} and  \eqref{def:lambda:H}, respectively.
\end{proposition}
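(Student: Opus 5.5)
The plan is to reduce Proposition~\ref{small-ball:u} to the exact small-ball theorem of Guo et al.\ \cite[Theorem~4.3(1)]{GSWX2025}, which is exactly where the restrictions $0\le\gamma<1$ and $\beta+\gamma<2+H$ enter. That result rests on the decomposition, for fixed $x$,
\[
u(t,x)=\kappa\,B_H(t)-Y(t),\qquad t\ge0,
\]
where $B_H$ is a standard FBM of index $H$ and $Y$ is a Gaussian process with smoother paths. I will first identify the constant. The incremental variance of the stationary-increment part is computed by extending the time integral in the harmonizable representation \eqref{Four:G:tx} to $(-\infty,t]$. This gives
\[
\frac1\pi\int_{\mathbb R}\frac{1-\cos(\tau h)}{|\tau|^{2H+1}}d\tau\int_{\mathbb R^d}\frac{|\xi|^{\ell-d}\,d\xi}{1+2|\xi|^\alpha\cos(\pi\beta/2)+|\xi|^{2\alpha}}.
\]
Here the $\xi$-factor comes from $|\int_0^\infty e^{i\tau s}s^{\beta+\gamma-1}E_{\beta,\beta+\gamma}(-s^\beta|\xi|^\alpha)\,ds|^2=|\tau|^{-2\gamma}/|(-i\tau)^{\beta}+|\xi|^\alpha|^2$, after rescaling $\xi\mapsto|\tau|^{\beta/\alpha}\xi$. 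The resulting exponent $2H+1$ matches \eqref{def:H}, so the variance is $\kappa^2h^{2H}$ with $\kappa$ as in \eqref{def:kappa}.

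The key steps for the lower bound on the probability (upper bound on $\lambda$) are as follows.
\begin{itemize}
\item First, show that $Y$ is ``smoother'' in the small-ball sense: for some $\theta>H$, $\|Y(t)-Y(s)\|_{L^2(\Omega)}\le C|t-s|^{\theta}$ on $[0,1]$. Then Talagrand/Dudley entropy bounds give $-\log\bp(\sup_{[0,1]}|Y|\le\eta\varepsilon)=o(\varepsilon^{-1/H})$ for every fixed $\eta>0$.
\item Second, combine this with the Gaussian correlation inequality, or Anderson's inequality plus Li's weak-correlation bound if $B_H$ and $Y$ are dependent. This yields
\[
\bp\Bigl(\sup|u|\le\varepsilon\Bigr)\ge\bp\Bigl(\sup|\kappa B_H|\le(1-\eta)\varepsilon\Bigr)\,\bp\Bigl(\sup|Y|\le\eta\varepsilon\Bigr).
\]
Using \eqref{def:lambda:H} and then letting $\eta\downarrow0$ gives the liminf half of \eqref{pr:u}.
\end{itemize}

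For the reverse inequality I will use $\{\sup|u|\le\varepsilon\}\subset\{\sup|\kappa B_H|\le(1+\eta)\varepsilon\}\cup\{\sup|Y|>\eta\varepsilon\}$. This does not work directly, since $\bp(\sup|Y|>\eta\varepsilon)$ is not small. Instead I will apply the weak-correlation inequality of Li in the form $\bp(\|X+Y\|\le\varepsilon)\le\bp(\|X\|\le(1+\eta)\varepsilon)/\bp(\|Y\|\le\eta\varepsilon)$ for jointly Gaussian $X,Y$. This follows from Anderson's inequality applied to the Gaussian vector $(X+Y,Y)$, and the same entropy estimate for $Y$ then finishes.

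The main obstacle is the smoothness of $Y$ uniformly on $[0,1]$ down to $t=0$. The remainder involves $\int_{-\infty}^0[G(t-s)-G(-s)]W(ds,dy)$, and also the correction from the initial-time truncation. Its regularity is governed by derivative bounds from \eqref{par}, namely $\partial_t[t^{\beta+\gamma-1}E_{\beta,\beta+\gamma}]=t^{\beta+\gamma-2}E_{\beta,\beta+\gamma-1}$, combined with \eqref{Upper:E}. Near $t=0$ these produce integrability conditions, and I expect them to be precisely $\gamma<1$ and $\beta+\gamma<2+H$. I would verify the variance bound by the frequency-splitting used in Lemma~\ref{U:JR}, and if the direct bound fails, I would cite \cite[Theorem~4.3(1)]{GSWX2025} verbatim.
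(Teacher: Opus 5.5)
Your architecture matches the paper's: reduce to \cite[Theorem~4.3(1)]{GSWX2025}, identify the stationary-increment part through exactly your spectral computation (the paper also obtains the density $\frac{I_{\alpha,\beta,\ell}}{2\pi}|\tau|^{-2H-1}$ and increment variance $\kappa^2|t-s|^{2H}$), and close with a Gaussian-correlation sandwich. The gap is in the one step you propose to prove yourself. The bound $\|Y(t)-Y(s)\|_{L^2(\Omega)}\le C|t-s|^{\theta}$ with $\theta>H$, uniformly on $[0,1]$, is false, and no integrability condition can rescue it (in particular not $\gamma<1$ or $\beta+\gamma<2+H$). Since $Y(0)=0$ and, by Lemma~\ref{solution:pro}, $\|u(t,x)\|_{L^2(\Omega)}=\widetilde\kappa t^H$, every decomposition $u=\kappa B_H-Y$ satisfies $\|Y(t)\|_{L^2(\Omega)}\ge|\kappa-\widetilde\kappa|\,t^H$. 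In general $\kappa\neq\widetilde\kappa$: for the admissible case $\beta=1$, $\gamma=0$, $H_0=1/2$, $\alpha=2$, $\ell=d=1$, one finds $\widetilde\kappa^2=\sqrt{2\pi}$ and $\kappa^2=2\sqrt\pi$. This discrepancy is exactly why Theorems~\ref{Khinchin:u} and~\ref{chung:u} carry different constants, and why the paper's introduction warns that the remainder is not negligible as $t\downarrow0$. For your own remainder $Y(t)=\int_{-\infty}^0\int_{\mathbb R^d}[G(t-s,x-y)-G(-s,x-y)]W(ds,dy)$ the failure is even more direct. $Y$ inherits $H$-self-similarity, so $\|Y(t)\|_{L^2(\Omega)}=t^H\|Y(1)\|_{L^2(\Omega)}$. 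At the origin the remainder is of the same order $t^H$ as $\kappa B_H$, not of higher H\"older order.

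What makes $Y$ negligible for small balls is smoothness away from $0$ with a controlled blow-up. For the natural remainder one checks from \eqref{par}, \eqref{Upper:E} and self-similarity that $\|\dot Y(t)\|_{L^2(\Omega)}=t^{H-1}\|\dot Y(1)\|_{L^2(\Omega)}$ with $\|\dot Y(1)\|_{L^2(\Omega)}<\infty$. Hence $d_Y(s,t)\le C|t^H-s^H|$ and $N([0,1],d_Y;\varepsilon)\le C\varepsilon^{-1}$. Talagrand's entropy lower bound then gives $-\log\mathbb P(\sup_{[0,1]}|Y|\le\eta\varepsilon)\le C(\eta\varepsilon)^{-1}=o(\varepsilon^{-1/H})$. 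This is precisely where $H<1$ is used; the paper's proof says ``since $H<1$, the remainder has a strictly smaller small-deviation exponent.'' Your plan never invokes $H<1$, which is a symptom of the misidentified mechanism.

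Two smaller corrections. First, for dependent $X,Y$ the inequality $\mathbb P(\|X+Y\|\le\varepsilon)\,\mathbb P(\|Y\|\le\eta\varepsilon)\le\mathbb P(\|X\|\le(1+\eta)\varepsilon)$ is the Gaussian correlation inequality (Royen), not a consequence of Anderson's inequality. Li's weak-correlation inequality gives it only with dilation losses $\lambda$ and $(1-\lambda^2)^{1/2}$, which must afterwards be removed by letting $\lambda\uparrow1$. Second, \cite{GSWX2025} cannot be cited verbatim. As the paper does, you must check that its remainder estimates use only the homogeneity of the spatial spectral measure, \eqref{Upper:E} and \eqref{par}, so that they transfer to the Riesz density $|\xi|^{\ell-d}$.
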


\begin{proof}
  {The exact small-ball result \cite[Theorem~4.3(1)]{GSWX2025} is formulated for $0\le\gamma<1$, which explains this hypothesis. We next verify the Fourier normalization and the correspondence between its parameters and those used here.} Under the Fourier-domain covariance convention \eqref{e:covariance-noise-2}, the temporal Fourier factor is $(2\pi)^{-1}$. Therefore, the stationary-increment component used in Theorem~4.3(1) of \cite{GSWX2025} has, at a fixed spatial point, temporal spectral density
\[
\textcolor{black}{
\frac{1}{2\pi}|\tau|^{1-2\gamma-2H_0}
\int_{\mathbb R^d}
\frac{|\xi|^{\ell-d}}
{|\tau|^{2\beta}+2|\tau|^\beta|\xi|^\alpha\cos(\pi\beta/2)+|\xi|^{2\alpha}}\,d\xi .}
\]
\textcolor{black}{With the change of variables $\xi=|\tau|^{\beta/\alpha}\eta$, the spatial integral contributes the factor $|\tau|^{\beta\ell/\alpha-2\beta}$. By \eqref{def:H}, the resulting temporal density equals}
\[
\textcolor{black}{
\frac{I_{\alpha,\beta,\ell}}{2\pi}|\tau|^{-2H-1},
\qquad
I_{\alpha,\beta,\ell}:=
\int_{\mathbb R^d}
\frac{|\eta|^{\ell-d}}
{1+2|\eta|^\alpha\cos(\pi\beta/2)+|\eta|^{2\alpha}}\,d\eta .}
\]
\textcolor{black}{Consequently, the increment variance of this component is}
\[
\textcolor{black}{
\frac{I_{\alpha,\beta,\ell}}{\pi}
\int_{\mathbb R}\frac{1-\cos((t-s)\tau)}{|\tau|^{2H+1}}\,d\tau
=\kappa^2|t-s|^{2H},}
\]
where $\kappa$ is the constant in \eqref{def:kappa}.
Thus the stationary-increment component has the same
finite-dimensional distributions as $\kappa B_H$. 

\textcolor{black}{It remains to control the smoother remainder. The proofs of Proposition~4.2(1) and Theorem~4.3(1) in \cite{GSWX2025} use only the homogeneity of the spatial spectral measure, the Mittag--Leffler estimate \eqref{Upper:E}, and the differentiability identity \eqref{par}. The same estimates therefore apply to the Riesz-type density $|\xi|^{\ell-d}$. The additional condition required for the remainder is automatic when $\beta+\gamma\le2$. When $\beta+\gamma>2$, it becomes}
\[
\textcolor{black}{-H_0+\frac{\beta\ell}{2\alpha}<1,}
\]
\textcolor{black}{which is equivalent, by \eqref{def:H}, to $\beta+\gamma<2+H$. Since $H<1$, the remainder has a strictly smaller small-deviation exponent than the fractional Brownian component. The Gaussian correlation argument in the proof of \cite[Theorem~4.3(1)]{GSWX2025} therefore yields the same exact small-ball constant as that of $\kappa B_H$, which proves \eqref{pr:u}.}
\end{proof}

\textcolor{black}{We also use the following Gaussian isoperimetric inequality.}
\begin{lemma}\label{isoperimetric}
There  is a universal constant $K_0$ such that the following statement holds. Let $S$ be a bounded set and $\{X(s),s\in S\}$ be a separable Gaussian process.
Let $D_X(S) := \sup\left\{d_X(s, t):s, t \in S\right\}$ be the diameter of $S$ in the metric $d_X$. Then for any $u>0$,
\[
\mathbb P\left(\sup_{s,t\in S}   |X(s)-X(t)|\ge K_0\left(u+\int_0^{D_X(S)} \sqrt{\log N(S,d_X;\varepsilon)}  d\varepsilon    \right)  \right)\le \exp\left(-\frac{u^2}{D_X(S)^2}\right).
\]
\end{lemma}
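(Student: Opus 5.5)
The plan is to combine Dudley's entropy bound, which controls the mean of the supremum, with the Borell--TIS concentration inequality, which controls the fluctuations around that mean. We apply both to the increment process indexed by pairs.

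\textbf{Setup.} Set $T:=S\times S$ and $Y(s,t):=X(s)-X(t)$ for $(s,t)\in T$. Because $Y(t,s)=-Y(s,t)$, we have
\[
\sup_{s,t\in S}|X(s)-X(t)|=\sup_{(s,t)\in T}Y(s,t).
\]
So it suffices to bound the upper tail of $\sup_T Y$. Each $Y(s,t)$ is centered Gaussian with $\operatorname{Var}Y(s,t)=d_X(s,t)^2\le D_X(S)^2$. Hence the $\sigma^2$ appearing in Borell--TIS is at most $D_X(S)^2$.

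\textbf{Reductions.} If $\int_0^{D_X(S)}\sqrt{\log N(S,d_X;\varepsilon)}\,d\varepsilon=\infty$, there is nothing to prove. If $D_X(S)=0$, then $\sup_T Y=0$ a.s. and the claim is trivial. By separability, the supremum over $S$ coincides a.s. with the supremum over a countable dense set $S_0$. This supremum is the increasing limit of suprema over finite subsets $S_k\uparrow S_0$. It therefore suffices to prove the bound for finite $S$, with a constant $K_0$ independent of the set, and pass to the limit by monotone convergence. Passing to subsets does not increase the diameter or the covering numbers, up to the usual factor $2$ in the radius, which is absorbed into the universal constant.

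\textbf{Main steps.}

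1. We bound the mean by Dudley's theorem. The standard form gives
\[
\mathbb E\sup_{s,t}|X(s)-X(t)|\le C_1\int_0^{D_X(S)}\sqrt{\log N(S,d_X;\varepsilon)}\,d\varepsilon
\]
for a universal constant $C_1$. It applies directly to the increment supremum, since the chaining argument bounds $\mathbb E\sup_{s,t}|X(s)-X(t)|$ itself. In particular the mean is finite, which is exactly the hypothesis Borell--TIS needs.

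2. We then apply Borell--TIS to $Y$ on $T$. This gives
\[
\mathbb P\Big(\sup_T Y\ge \mathbb E\sup_T Y+r\Big)\le\exp\big(-r^2/(2D_X(S)^2)\big)\qquad\text{for } r>0.
\]
Taking $r=\sqrt2\,u$ produces the bound $\exp(-u^2/D_X(S)^2)$.

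3. Finally we choose $K_0:=\max\{C_1,\sqrt2\}$. Then
\[
K_0\Big(u+\int_0^{D_X(S)}\sqrt{\log N}\,d\varepsilon\Big)\ge \sqrt2\,u+\mathbb E\sup_T Y.
\]
Since the event $\{\sup_T Y\ge K_0(u+\int\cdots)\}$ is contained in $\{\sup_T Y\ge \mathbb E\sup_T Y+\sqrt2\,u\}$, the stated inequality follows.

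\textbf{Main obstacle.} The only delicate point is justifying the use of Borell--TIS, which requires $\sup_T Y<\infty$ a.s. and a finite mean. The finite-set reduction together with Dudley's bound in step 1 supplies this. The other point needing care is keeping the constant universal through the separability limit, which holds because both $C_1$ and $\sqrt2$ do not depend on $S$.
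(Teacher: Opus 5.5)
The paper states this lemma without proof. Your argument is precisely the standard derivation of this classical estimate: Dudley's entropy bound for $\mathbb E\sup_{s,t}|X(s)-X(t)|$, combined with Borell--TIS (Gaussian isoperimetric) concentration for the increment process on $S\times S$, whose variances are at most $D_X(S)^2$. It is correct.

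Only bookkeeping needs care:
\begin{itemize}
\item Because of the factor $2$ lost in passing to finite subsets, the explicit choice should be $K_0=\max\{2C_1,\sqrt2\}$.
\item The monotone limit should be taken for strict inequalities, letting $u'\uparrow u$.
\item An infinite entropy integral is not literally vacuous: an unbounded process with $D_X(S)<\infty$ would violate the bound. The lemma should therefore be read with that integral finite, as it is in the paper's application.
\end{itemize}
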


The following lemma is a standard consequence of Anderson's inequality.
\begin{lemma}{\rm (\cite[Lemma 2.8]{FFM2026})} \label{Gaussian:sym}
Let $X$ and $Y$ be independent centered Gaussian processes on $S$. Then for any $a>0$,
\[
\bp\left(\sup_{t\in S}  |X(t)+Y(t)|\le a\right)\le \bp\left(\sup_{t\in S}  |X(t)|\le a\right).
\]
\end{lemma}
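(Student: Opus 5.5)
The plan is to condition on $Y$, use Anderson's inequality for the symmetric convex set defined by the sup-norm constraint, and then pass from finitely many indices to all of $S$ by monotone limits.

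\textbf{Reduction to finitely many indices.} The processes in our applications are continuous (or at least separable), so $\sup_{t\in S}$ can be replaced by $\sup_{t\in S_0}$ for a countable set $S_0\subset S$. Write $S_0=\bigcup_k F_k$ with finite sets $F_1\subset F_2\subset\cdots$. The events $\{\sup_{t\in F_k}|X(t)+Y(t)|\le a\}$ decrease to $\{\sup_{t\in S_0}|X(t)+Y(t)|\le a\}$ as $k\to\infty$, and likewise with $Y\equiv0$. By continuity of probability from above, it therefore suffices to prove the inequality for a finite index set $F=\{t_1,\dots,t_m\}$.

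\textbf{Finite-dimensional case via Anderson's inequality.} Put $\mathbf X=(X(t_1),\dots,X(t_m))$ and $\mathbf Y=(Y(t_1),\dots,Y(t_m))$. These are independent centered Gaussian vectors in $\mathbb R^m$. The set
\[
K:=\{z\in\mathbb R^m:\max_i|z_i|\le a\}
\]
is convex and symmetric about the origin. Anderson's inequality states that for a centered Gaussian vector $\mathbf X$ and every $h\in\mathbb R^m$,
\[
\bp(\mathbf X+h\in K)\le \bp(\mathbf X\in K).
\]
This holds even when the covariance is degenerate: one may restrict to the support subspace, where the law has a symmetric unimodal (log-concave) density, or approximate by $\mathbf X+\eta\mathbf Z$ with $\eta\downarrow0$, using that $K$ is closed.

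\textbf{Conditioning on $Y$.} Since $\mathbf X$ and $\mathbf Y$ are independent, Fubini's theorem gives
\[
\bp(\mathbf X+\mathbf Y\in K)=\int_{\mathbb R^m}\bp(\mathbf X+y\in K)\,\bp_{\mathbf Y}(dy)\le \bp(\mathbf X\in K).
\]
This is exactly the claim for the index set $F$, and the reduction step above then yields it for $S$.

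The only step requiring care is the passage to infinite $S$, which needs measurability of the supremum; separability (continuity of paths in our setting) supplies this. The rest is a direct application of Anderson's inequality.
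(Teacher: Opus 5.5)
Your proof is correct and follows the route the paper intends. The paper gives no argument of its own; it cites \cite[Lemma 2.8]{FFM2026} as a standard consequence of Anderson's inequality, which is exactly your conditioning on $Y$ via Fubini, Anderson's inequality for the symmetric convex box on finite-dimensional marginals, and a separability limit (supplied in the application by continuity of $v_n$ and $\tilde v_n$). The one spot to tighten is the degenerate-covariance remark: cite Anderson's inequality directly for arbitrary centered Gaussian measures (it follows from log-concavity), because restricting to the support can give a non-symmetric slice of the box when the shift leaves the support, and the $\eta\downarrow0$ approximation needs a slight enlargement of $a$ to handle mass on the boundary.
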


\subsection{Proof of Theorem \ref{chung:u}}
\textcolor{black}{We are now ready to prove Chung's LIL.}
\begin{proof}[Proof of Theorem \ref{chung:u}]
\textcolor{black}{For $0<r<e^{-1}$, set}
\begin{equation}\label{def:psi}
\textcolor{black}{\psi(r):=r^H\bigl(\log\log r^{-1}\bigr)^{-H}
=\left(\frac{r}{\log\log(1/r)}\right)^H.}
\end{equation}
\noindent\textcolor{black}{(1) {\bf Lower bound.}}   In this step, we prove the lower bound:
\begin{equation}\label{u:lower}
	\liminf_{\varepsilon\downarrow0}\sup_{t\in[0,\varepsilon]}
	\frac{|u(t,x)|}{\e^{H}(\log\log \e^{-1})^{-H}}\geq
	\kappa\lambda_H^{ H },\ \ \text{a.s.}
\end{equation}

Fix $c>0$, and define     $r_n:=e^{-cn}$ for $n\ge1$.   For any fixed $x\in \mathbb R^d$ and $0<q<\lambda_H$, define
$$
A_n:=\left\{ \sup_{t\in[0,r_n]}|u(t,x)|    \leq \kappa\left(\frac{ q  r_n}
{  \log \log (1/r_n) }\right)^{H} \right\}.
$$
By the self-similarity of $\{u(t, x)\}_{t\ge0}$ and \eqref{pr:u}, there exists a constant
$N_0>0$ such that for any $n\ge N_0$,
$$
\mathbb P\left(A_n\right) \le (nc)^{ -  \frac{\lambda_H+q}{2q}}.
$$
Consequently,
$$
\sum_{n\in \mathbb N} \mathbb P\left(A_n\right) <\infty.
$$
\textcolor{black}{For each fixed $q\in(0,\lambda_H)$,  Borel--Cantelli's lemma gives}
\[
\textcolor{black}{
\liminf_{n\to\infty}
\frac{\sup_{0\le t\le r_n}|u(t,x)|}{\psi(r_n)}
\ge \kappa q^H,
\qquad\text{a.s.}}
\]
\textcolor{black}{Choosing a countable sequence $q_m\uparrow\lambda_H$ and intersecting the corresponding probability-one events, we obtain}
\[
\textcolor{black}{
\liminf_{n\to\infty}
\frac{\sup_{0\le t\le r_n}|u(t,x)|}{\psi(r_n)}
\ge \kappa\lambda_H^H,
\qquad\text{a.s.}}
\]
The function $\psi$ is increasing on $(0,r_0)$ for some sufficiently
small $r_0>0$. Thus, for $r_{n+1}<r\le r_n$, the numerator is at least its value at $r_{n+1}$, whereas $\psi(r)\le\psi(r_n)$. Hence
\[
\textcolor{black}{
\frac{\sup_{0\le t\le r}|u(t,x)|}{\psi(r)}
\ge
\frac{\sup_{0\le t\le r_{n+1}}|u(t,x)|}{\psi(r_n)}
=
\frac{\psi(r_{n+1})}{\psi(r_n)}
\frac{\sup_{0\le t\le r_{n+1}}|u(t,x)|}{\psi(r_{n+1})}.}
\]
\textcolor{black}{Since $r_{n+1}/r_n=e^{-c}$ and}
\[
\textcolor{black}{
\frac{\log\log r_n^{-1}}{\log\log r_{n+1}^{-1}}
=
\frac{\log(cn)}{\log(c(n+1))}\longrightarrow1,}
\]
\textcolor{black}{we have $\psi(r_{n+1})/\psi(r_n)\to e^{-cH}$. Therefore,}
\[
\textcolor{black}{
\liminf_{r\downarrow0}
\frac{\sup_{0\le t\le r}|u(t,x)|}{\psi(r)}
\ge
\kappa e^{-cH}\lambda_H^H,
\qquad\text{a.s.}}
\]
\textcolor{black}{Taking a countable sequence $c_m\downarrow0$ and intersecting the corresponding probability-one events, we obtain the lower bound \eqref{u:lower}.}

\noindent\textcolor{black}{(2) {\bf Upper bound.}}  In this step, we prove the upper bound:
\begin{equation}\label{u:upper}
	\liminf_{r\rightarrow0}\sup_{t\in[0,r]}\frac{|u(t,x)|}  {{r^{H}(\log\log r^{-1})^{-H}} } \le
	\kappa\lambda_H^{ H},\ \ \text{a.s.}
\end{equation}
The proof of \eqref{u:upper} is inspired by Lee and Xiao \cite[pp. 531-533]{LX2023}.

\textcolor{black}{We continue to use the normalizing function $\psi$ defined in \eqref{def:psi}.}
Fix a sufficiently small $\delta>0$. For any $n\ge 1$, let
\begin{equation*}
	t_n:=\rho_n^{\frac1{H}}\,\,\,\text{with}\,\,\,\,\rho_n:=\exp\left(-\left(n^\delta+n^{1+\delta}\right)\right).
\end{equation*}
\textcolor{black}{By Proposition~\ref{Harmonizable}}, we can write $u(t,x)=v_n(t)+\tilde{v}_n(t)$, where
$$
v_n(t):=v([b_n,b_{n+1}),t,x),\,\,\,\tilde{v}_n(t):=v(\RR_+\setminus [b_n,b_{n+1}),t,x),
$$
with $b_n:=\exp\left(n^{1+\delta}\right)$.
 
For every fixed $x\in\RR^d$, we aim to prove that
\begin{equation}\label{U:vn:Chung}
	\liminf_{n\rightarrow\infty} \sup_{t\in[0,t_n]} \frac{|v_n(t)|}{\psi(t_n)}\le\left((1+2\delta) \kappa^{1/H} \lambda_H  \right)^H,\,\,\text{a.s.},
\end{equation}
and
\begin{equation}\label{U:tilde:vn:Chung}
	\limsup_{n\rightarrow\infty}\sup_{t\in[0,t_n]}  \frac{|\tilde{v}_n(t)|}{\psi(t_n)}=0,\,\,\text{a.s.}
\end{equation}
To prove \eqref{U:vn:Chung},
since $u(t,x)=v_n(t)+\tilde{v}_n(t)$ and the processes $v_n$ and $\tilde{v}_n$ are independent, we can apply
Lemma \ref{Gaussian:sym},  the self-similarity of $\{u(t, x)\}_{t\ge0}$ and  \eqref{pr:u} to obtain that 
there exists a constant $0<\eta<\delta\kappa^{1/H} \lambda_H/(1+\delta)$ such that for all  sufficiently large $n$,
\begin{equation*}
	\begin{split}
		& \pp\left(\sup_{t\in[0,t_n]}  |v_n(t)| \le \left((1+2\delta) \kappa^{1/H} \lambda_H  \right)^H  \psi(t_n)  \right)\\
        \ge &\,
		\pp\left(\sup_{t\in[0,t_n]}  |u(t,x)| \le \left((1+2\delta) \kappa^{1/H} \lambda_H  \right)^H \psi(t_n)  \right)\\
		=&\, \pp\left(\sup_{t\in[0,1]}  |u(t,x)| \le \left((1+2\delta) \kappa^{1/H} \lambda_H  \right)^H  (\log\log t_n^{-1})^{-H}  \right)\\
	\ge&\, \exp\left(  -\frac1{(1+2\delta)}\left(1+\frac{\eta}{\kappa^{1/H} \lambda_H}   \right)     (\log\log t_n^{-1})  \right)\\
    =&\, \left(\frac1H\left(n^\delta+n^{1+\delta}\right)\right)^{-\frac1{(1+2\delta)} 
  \left(1+ \frac{\eta}{\kappa^{1/H} \lambda_H}      \right)}\\
    \ge&\, c_{4,1}n^{-\frac{1+\delta}{1+2\delta}\left(1+\frac{\eta}{\kappa^{1/H} \lambda_H} \right)  },   
 \end{split}
\end{equation*}       
    where $c_{4,1}\in(0,\infty)$. The choice of $\eta$ yields 
  \[
\frac{1+\delta}{1+2\delta}\left(1+\frac{\eta}{\kappa^{1/H} \lambda_H} \right)<1,
  \]  
  and then
\[
\sum_{n=1}^\infty
\mathbb P\left\{
        \sup_{t\in[0,t_n]} |v_n(t)|
        \le
        \left((1+2\delta)\kappa^{1/H}\lambda_H\right)^H
        \psi(t_n)
\right\}
=
\infty .
\]
\textcolor{black}{Because the corresponding frequency blocks are pairwise disjoint, the processes $\{v_n\}_{n\ge1}$ are independent. Hence, by the second Borel--Cantelli lemma, the event inside the probability occurs infinitely often almost surely. This proves \eqref{U:vn:Chung}.}

\textcolor{black}{For \eqref{U:tilde:vn:Chung}, independence implies that variances add. Hence, by Proposition~\ref{solution:holder}, for all $s,t\in[0,t_n]$,}
\[
\|\tilde{v}_n(t)-\tilde{v}_n(s)\|_{L^2(\Omega)}\le \|u(t,x)-u(s,x)\|_{L^2(\Omega)}\le c_{2,1}|t-s|^H.
\]
\textcolor{black}{As in the proof of \eqref{cov:num}, the covering number $N([0,t_n],d_{\tilde v_n},\varepsilon)$ obeys}
\[
N([0,t_n], d_{\tilde{v}_n}, \varepsilon)\le c_{4,2} t_n \varepsilon^{-\frac1H}.
\]
Using \eqref{U:Dn} again, the diameter of $[0,t_n]$ 
under the canonical metric of $\{\tilde{v}_n(t)\}_{t\ge 0}$ is
\begin{equation*}
	D_n([0,t_n])\le c_{3,2}\rho_n\exp\left(-c^* n^\delta\right).
\end{equation*}

Then, by the change of variables $w:=c_{4,2} ^{-1}t_n^{-1}\varepsilon^{\frac1H}$, there exists a constant $c_{4,3}>0$ such that for large $n$,
\begin{equation*}
	\begin{split}
		\int_0^{D_n([0,t_n])}\sqrt{\log N\left([0,t_n],d_{\tilde{v}_n},\varepsilon  \right)}  d\varepsilon
		\le&\, \int_0^{c_{3,2} \rho_n\exp\left(-c^* n^\delta\right)}\sqrt{\log \left(c_{4,2}  t_n \varepsilon^{-\frac1H} \right) } d\varepsilon \\
		=&\,Hc_{4,2}^H  t_n^H\int_0^{ c_{4,2} ^{-1}c_{3,2} ^{\frac1H} \exp\left(-\frac{c^*} H   n^\delta\right)}w^{H-1}\sqrt{-\log w}\,dw\\
		\le &\, c_{4,3} t_n^H\exp\left(-c^* n^\delta\right)n^{\frac\delta 2}.
	\end{split}
\end{equation*}
\textcolor{black}{In the last step, we use} the following elementary estimate: for every
\(q<1\) and \(\eta\in(0,1)\), there exists a constant
\(C=C(q,\eta)>0\) such that, for all \(A\in(0,\eta)\),
\[
        \int_0^A x^{-q}\sqrt{-\log x}\,dx
        \le
        C A^{1-q}\sqrt{-\log A}.
\]
See \cite[Lemma~B.3]{CT2025}.

For every $\zeta>0$, by \eqref{def:psi},  there exists an integer $N_1(\zeta)>0$, depending on $\zeta$, such that for all $n\geq N_1(\zeta)$,
\begin{equation*}
	\int_0^{D_n([0,t_n])}\sqrt{\log N\left([0,t_n],d_{\tilde{v}_n},\varepsilon  \right)}  d\varepsilon 	\leq\zeta \psi(t_n).
\end{equation*}

Since $\widetilde v_n(0)=0$, we have
\[
\sup_{0\le t\le t_n}|\tilde v_n(t)|
\le
\sup_{s,t\in[0,t_n]}|\tilde v_n(t)-\textcolor{black}{\tilde v_n(s)}|.
\]
  Applying Lemma  \ref{isoperimetric} to the increment process on $[0,t_n]$ with $u=\zeta \psi(t_n)$, we obtain that for all sufficiently large $n$,
\begin{equation*}
	\begin{split}
		&\, \pp\left(\sup_{t\in[0,t_n]}|\tilde{v}_n(t)| \ge 2K_0\zeta \psi(t_n)  \right)\\
		\le &\, \pp\left(\sup_{s,t\in [0,t_n]} |\tilde{v}_n(t)-\tilde{v}_n(s)|\ge K_0\left(\zeta \psi(t_n)+\int_0^{D_n([0,t_n])} \sqrt{\log N\left([0,t_n],d_{\tilde{v}_n},\varepsilon\right)}  d\varepsilon    \right) \right)\\
		\le&\, \exp\left(-c_{4,4}\frac {\exp(2c^*n^\delta)} {   (\log((n^\delta+n^{1+\delta})/H) )^{2H} }    \right),
	\end{split}
\end{equation*}
where $K_0$ is the universal constant in Lemma \ref{isoperimetric}.
Hence,
\begin{equation*}
	\sum_{n=1}^\infty\pp\left(\sup_{t\in[0,t_n]}|\tilde{v}_n(t)| \ge 2K_0\zeta \psi(t_n)  \right)<\infty.
\end{equation*}
By the Borel--Cantelli lemma,
\begin{equation*}
	\limsup_{n\rightarrow\infty}\sup_{t\in[0,t_n]}\frac{|\tilde{v}_n(t)|}  {\psi(t_n)}\le2K_0\zeta,\,\,\text{a.s.}
\end{equation*}
Since $\zeta>0$ is arbitrary, we get \eqref{U:tilde:vn:Chung}.

By the triangle inequality and the decomposition
$u=v_n+\widetilde v_n$,
\begin{align*}
\textcolor{black}{
\liminf_{r\downarrow0}
\frac{\sup_{0\le t\le r}|u(t,x)|}{\psi(r)}}
&\textcolor{black}{\le
\liminf_{n\to\infty}
\frac{\sup_{0\le t\le t_n}|u(t,x)|}{\psi(t_n)}}\\
&\textcolor{black}{\le
\liminf_{n\to\infty}
\frac{\sup_{0\le t\le t_n}|v_n(t)|}{\psi(t_n)}
+
\limsup_{n\to\infty}
\frac{\sup_{0\le t\le t_n}|\tilde v_n(t)|}{\psi(t_n)}}\\
&\textcolor{black}{\le
\left((1+2\delta)\kappa^{1/H}\lambda_H\right)^H,
\qquad\text{a.s.}}
\end{align*}
\textcolor{black}{Thus,}
\begin{equation}\label{u:upper:1}
\textcolor{black}{
\liminf_{r\downarrow0}
\frac{\sup_{0\le t\le r}|u(t,x)|}{r^H(\log\log r^{-1})^{-H}}
\le
\left((1+2\delta)\kappa^{1/H}\lambda_H\right)^H,
\qquad\text{a.s.}}
\end{equation}
\textcolor{black}{Taking $\delta$ to zero along a countable sequence and combining \eqref{u:lower} with \eqref{u:upper:1} completes the proof.}
\end{proof}

 \noindent{\bf Funding}:  Not applicable.

 \noindent{\bf Author Contributions}:     C. Liu and R. Wang  contributed to writing, reviewing and editing.

 \noindent{\bf Data Availability Statement}:  No data were used for the research described in the article.

 \noindent {\bf Conflict of Interest}:   The authors declare that they have no conflict of interest.

   \noindent {\bf Clinical trial number}:   Not applicable.

\end{document}